\newcommand{\dis}{\displaystyle}
 \newcommand{\Om}{\Omega}
\newcommand{\la}{\lambda}
\newtheorem{theo}{Theorem}[section]
\newtheorem{lem}[theo]{Lemma}
\newtheorem*{example}{Example}
\def\r{\mathbb{R}}
\def\rn{\mathbb{R}^N}
\def\rm{\mathbb{R}^{M}}
\def\eps{\varepsilon}
\def\irm{\int_{\rm}}
\def\io{\int_{\Omega}}
\def\iom{\int_{\omega}}
\def\wt{\widetilde}
\def\calD{\mathcal{D}}
\numberwithin{equation}{section}
\title[Weakly coercive problems]{On some weakly coercive quasilinear problems\\ with forcing}
\author{Andrzej Szulkin} 
\address{Department of Mathematics, Stockholm University, 106 91  Stockholm, Sweden}
\email{andrzejs@math.su.se}
\author{Michel Willem} 
\address{Inst.\ de Recherche  en Math.\ et Phys., Universit\'e catholique de Louvain, 1348 Louvain-la-Neuve, Belgium}
\email{michel.willem@uclouvain.be}
\subjclass[2010]{35J20, 35J92}
\keywords{Forced problem, Hardy potential, $p$-Laplacian, Poincar\'e constant, weakly coercive}
\begin{document}

\baselineskip15pt

\maketitle

\begin{abstract}
We consider the forced problem
$-\Delta_p u - V(x)|u|^{p-2} u = f(x)$,
where $\Delta_p$ is the $p$-Laplacian ($1<p<\infty$) in a domain $\Omega\subset \mathbb{R}^N$,  $V\ge 0$ and
$Q_V (u) :=  \int_\Omega |\nabla u|^p\, dx - \int_\Omega V|u|^p\,dx$ satisfies the condition (A) below. 
We show that this problem has a solution for all $f$ in a suitable space of distributions. Then we apply this result to some classes of functions $V$ which in particular include the Hardy potential  \eqref{hardy2} and the potential $V(x)=\lambda_{1,p}(\Omega)$, where $\lambda_{1,p}(\Omega)$ is the Poincar\'e constant on an infinite strip. 
\end{abstract}

\section{Introduction} \label{intro}

Our purpose is to solve the forced problem
\[
-\Delta_p u - V(x)|u|^{p-2} u = f(x)
\]
where $\Delta_p u := \textrm{div}\  (|\nabla u|^{p-2} \nabla u)$ and $f\in \mathcal{D}' (\Om)$, the space of distributions on the domain~$\Om$.

\medskip

Our assumptions are the following:\\[0.2cm]
\textbf{(A)}
\vspace*{-4.5mm}

\hspace*{5mm}
\begin{minipage}{140mm} \emph{ $1<p<\infty$, $V\in L^r_{\emph{loc}} \ (\Om)$ with $r$ as in \eqref{eq1.2}, $\Om$ is a domain in $\mathbb{R}^N$, $V \geq 0$ and for all test functions $u\in \mathcal{D} (\Om) \backslash \{0\}$,
\begin{equation} \label{eq1.1}
\mathcal{Q}_V (u) :=  \int_\Om |\nabla u|^p \,dx - \int_\Om V|u|^p \,dx > 0.
\end{equation}
There exists $1<q\leq p$ such that $p-1<q$,
\begin{equation} \label{eq1.2}
r=1 \ (N<q), \quad 1<r<+\infty \  (N=q), \quad 1/r+(p-1)/q^\ast=1 \  (N>q)
\end{equation}
and there exists $W \in \mathcal{C} (\Om)$, $W>0$, such that for all $u\in\mathcal{D}(\Om)$,
\begin{equation} \label{eq1.3}
\dis \left(\int_\Om(|\nabla u|^q + |u|^q)W\,dx\right)^{p/q} \leq \mathcal{Q}_V (u).
\end{equation}
}
\end{minipage}

\medskip

\par\noindent
Let us recall that $q^\ast:=Nq/(N-q)$.

\medskip

Our first example of $V$ is the \emph{quadratic Hardy potential} $(N\geq 3$, $p=2)$:
\begin{equation} \label{hardy1}
V(x):=\left({N-2\over 2}\right)^2 |x|^{-2}.
\end{equation}
The corresponding forced problem is solved in \cite{6} using the Brezis-Vazquez remainder term for the quadratic Hardy inequality (\cite{4} and \cite{10}).

A second example is the \emph{Hardy potential} $(1<p<N)$:
\begin{equation} \label{hardy2}
V(x) := \left({N-p\over p}\right)^p |x|^{-p}.
\end{equation}
The corresponding forced problem is partially solved in \cite{3} using the Abdellaoui-Colorado-Peral remainder term for the Hardy inequality \cite{1}.

A third example is a potential $V \in L^\infty_{\textrm{loc}}(\Om)$ such that (1.3) is satisfied with $p=q$ (see \cite{9}).

If $p=2$, the natural energy space is the completion $H$ of $\mathcal{D}(\Om)$ with respect to the norm $[\mathcal{Q}_V(u)]^{1/2}$, and it suffices to have $V\in L^1_{\textrm{loc}}(\Om)$. Then $H$ is a Hilbert space with an obvious inner product. The following result is immediate:

\begin{theo} \label{thma}
For each $f\in H^*$ (the dual space of $H$) the problem
\[
-\Delta u - V(x)u = f(x)
\]
has a unique solution $u\in H$. 
\end{theo}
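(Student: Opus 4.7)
The plan is straightforward: the statement is essentially the Riesz representation theorem in disguise, which is presumably why the authors call it immediate.

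When $p=2$, the quadratic form $\mathcal{Q}_V$ comes from the symmetric bilinear form
\[
a(u,v) := \int_\Om \nabla u \cdot \nabla v\,dx - \int_\Om V uv\,dx,
\]
which is well defined on $\mathcal{D}(\Om)\times\mathcal{D}(\Om)$ because $V\in L^1_{\textrm{loc}}(\Om)$ and test functions have compact support. Hypothesis (A) gives $a(u,u)=\mathcal{Q}_V(u)>0$ for $u\not=0$, so $a$ is an inner product on $\mathcal{D}(\Om)$ with associated norm $\mathcal{Q}_V(u)^{1/2}$, and the Cauchy--Schwarz bound $|a(u,v)|\le\mathcal{Q}_V(u)^{1/2}\mathcal{Q}_V(v)^{1/2}$ is automatic. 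Thus the completion $H$ is a real Hilbert space, and $a$ extends by continuity to the inner product on $H\times H$.

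Given $f\in H^*$, the Riesz representation theorem then produces a unique $u\in H$ with $a(u,v)=\langle f,v\rangle$ for every $v\in H$. Restricting to $v=\va\in\mathcal{D}(\Om)\subset H$ yields
\[
\int_\Om \nabla u \cdot \nabla \va\,dx - \int_\Om V u\va\,dx = \langle f,\va\rangle,
\]
which is precisely the distributional formulation of $-\Delta u-Vu=f$. Uniqueness is built into Riesz.

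The only conceptual point worth checking, and the nearest thing to an obstacle, is that $u$, which is a priori an abstract completion element, must be interpretable so that the integrals on the left-hand side above make sense for $\va\in\mathcal{D}(\Om)$. This is handled by density: if $(u_n)\subset\mathcal{D}(\Om)$ satisfies $u_n\to u$ in $H$, then $a(u_n,\va)$ is unambiguously defined and converges by continuity of $a(\cdot,\va)$, so one sets $a(u,\va):=\lim_n a(u_n,\va)$. This identifies $-\Delta u-Vu$ with an element of $\mathcal{D}'(\Om)$ and gives meaning to the equation; no further work is needed.
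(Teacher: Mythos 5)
Your proposal is correct and is exactly the argument the paper intends: the paper gives no written proof, calling the result immediate and referring to Lemma 1.1$'$ of Dupaigne, and that argument is precisely the Riesz representation theorem applied in the Hilbert space completion $H$, with the equation interpreted distributionally by restricting to test functions. Your extra care about interpreting the equation when $u$ is only an abstract completion element (defining $a(u,\va)$ by density) is a sensible and correct way to make the statement precise.
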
 

This is an extension of Lemma 1.1$'$ in \cite{6} though the argument is exactly the same as there. As we shall see at the beginning of Section \ref{appl}, more can be said about $u$ and $H^*$ if $V$ is the Hardy potential \eqref{hardy1} in a bounded domain.  

When $p\neq 2$, one can expect  no uniqueness as in Theorem \ref{thma}, see \cite{dem}, pp.\ 11-12 and \cite{fhtt}, Section 4. Hence the functional $[\mathcal{Q}_V(u)]^{1/p}$ is no longer convex, so it cannot serve as a norm,  and the second conjugate functional $Q^{\ast\ast}_V$ was used in \cite{9} to define the energy space. 

Our goal is to generalize all the above results by using only the Hahn-Banach theorem to define an energy space and to obtain a priori bounds.

We also consider the case of constant potential
\[
V\equiv \la_{1, p}(\Om):=\inf \left\{\int_\Om |\nabla u |^p \,dx : u\in \mathcal{D}(\Om),\int_\Om |u|^p \,dx=1\right\}
\]
on the cylindrical domain $\Om = \omega \times \mathbb{R}^M$ where $\omega\subset\r^{N-M}$ is bounded. Then we have
\begin{equation} \label{pi}
\la_{1,p} (\Om)=\la_{1,p}(\omega) > 0,
\end{equation} 
see Section \ref{po}.

The paper is organized as follows.   In Section \ref{aeconv} we prove an almost everywhere convergence result for the gradients.   In Section \ref{approx} we solve a sequence of approximate problems.   In Section \ref{main} we state and prove our main result.   Section \ref{po} is devoted to the proof of \eqref{pi} and to remainder terms for the Poincar\'e inequality.    In Section \ref{appl} we apply the main result to the potentials mentioned above.

\section{Almost everywhere convergence of the gradients} \label{aeconv}
  
Let us recall a classical result (see \cite{8}).

\begin{lem} \label{lem2.1}  For every $1<p<2$ there exists $c>0$ such that for all $x,y\in\mathbb{R}^N$,
\[
c\, |x-y|^2\big/\left(|x|+|y|\right)^{p-2} \leq \left(|x|^{p-2} x-|y|^{p-2}y\right) \cdot (x-y).
\]
For every $p\geq 2$ there exists $c>0$ such that for all $x,y\in \mathbb{R}^N$,
\[
c\, |x-y|^p\leq\left(|x|^{p-2} x-|y|^{p-2} y\right) \cdot (x-y).
\]
\end{lem}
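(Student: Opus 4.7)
The plan is to obtain both inequalities by expressing the difference $|x|^{p-2}x-|y|^{p-2}y$ as an integral along the segment joining $y$ and $x$, and then extracting pointwise lower bounds on the integrand according to the sign of $p-2$. Set $z(t)=y+t(x-y)$ for $t\in[0,1]$ and $F(\xi)=|\xi|^{p-2}\xi$. Using $DF(\xi)=|\xi|^{p-2}I+(p-2)|\xi|^{p-4}\xi\xi^{T}$ for $\xi\neq 0$ and the fundamental theorem of calculus,
\[
\bigl(|x|^{p-2}x-|y|^{p-2}y\bigr)\cdot(x-y)=\int_{0}^{1}\!\Bigl[|z(t)|^{p-2}|x-y|^{2}+(p-2)|z(t)|^{p-4}\bigl(z(t)\cdot(x-y)\bigr)^{2}\Bigr]dt.
\]
The proof reduces to estimating this integral in each regime.

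For $p\geq 2$, both summands in the integrand are nonnegative, so I keep only the first and obtain the lower bound $|x-y|^{2}\int_{0}^{1}|z(t)|^{p-2}dt$. Completing the square in $|z(t)|^{2}=|x-y|^{2}(t-t^{\ast})^{2}+h^{2}$, where $t^{\ast}=-y\cdot(x-y)/|x-y|^{2}$ and $h\geq 0$ is the distance from the origin to the line through $x,y$, one has $|z(t)|\geq|x-y|\,|t-t^{\ast}|$; raising this to the power $p-2\geq 0$ and integrating yields $\int_{0}^{1}|z(t)|^{p-2}dt\geq|x-y|^{p-2}\int_{0}^{1}|t-t^{\ast}|^{p-2}dt$. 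A short case analysis shows that the last integral is bounded below by a positive constant $c_{p}$ uniformly in $t^{\ast}\in\mathbb{R}$, giving the desired $c|x-y|^{p}$.

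For $1<p<2$ the factor $p-2$ is negative, so I must control the second term of the integrand from above. By Cauchy–Schwarz, $\bigl(z(t)\cdot(x-y)\bigr)^{2}\leq|z(t)|^{2}|x-y|^{2}$, and combining this with $p-2<0$ the integrand is bounded below by $(p-1)|z(t)|^{p-2}|x-y|^{2}$. Since $|z(t)|\leq|x|+|y|$ for $t\in[0,1]$ and again $p-2<0$, one has $|z(t)|^{p-2}\geq(|x|+|y|)^{p-2}$, producing the bound $(p-1)|x-y|^{2}(|x|+|y|)^{p-2}$, which is the stated inequality (with the obvious reading of the exponent).

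The main delicate point is the case $p\geq 2$, where the naive estimate $|z(t)|^{p-2}\geq 0$ is useless and one genuinely needs the geometric identity $|z(t)|\geq|x-y|\,|t-t^{\ast}|$ to recover the full power $|x-y|^{p-2}$. A secondary technicality is the potential vanishing of $z(t)$ for some $t\in[0,1]$, where $DF$ is singular; this is handled by approximating $(x,y)$ with pairs for which $0\notin\{z(t):t\in[0,1]\}$ and passing to the limit, since both sides of each inequality are continuous in $(x,y)$.
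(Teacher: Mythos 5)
Your proof is correct. Note that the paper itself does not prove Lemma \ref{lem2.1}; it is quoted as a classical result with a reference to Simon's paper \cite{8}, so there is no in-paper argument to compare with. Your route --- writing $F(x)-F(y)$ with $F(\xi)=|\xi|^{p-2}\xi$ as $\int_0^1 DF(z(t))(x-y)\,dt$ and bounding the quadratic form $|z|^{p-2}|x-y|^2+(p-2)|z|^{p-4}(z\cdot(x-y))^2$ from below in each regime --- is the standard one, and all the key steps check out: for $p\ge 2$ the bound $|z(t)|\ge |x-y|\,|t-t^*|$ together with $\int_0^1|t-t^*|^{p-2}\,dt\ge \min\bigl(\tfrac{1}{p-1},\tfrac{2^{2-p}}{p-1}\bigr)$ uniformly in $t^*$ gives $c\,|x-y|^p$; for $1<p<2$ the Cauchy--Schwarz step yields the factor $p-1>0$ and $|z(t)|\le|x|+|y|$ gives $(p-1)\,|x-y|^2(|x|+|y|)^{p-2}$. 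You are also right to read the exponent in the first display as you do: as printed, $|x-y|^2/(|x|+|y|)^{p-2}$ has the wrong homogeneity (degree $4-p$ instead of $p$); the intended bound is $c\,|x-y|^2(|x|+|y|)^{p-2}$, i.e.\ division by $(|x|+|y|)^{2-p}$, which is exactly what you prove. Two small remarks. First, your closing approximation argument (perturbing so that the segment avoids the origin) only works for $N\ge 2$; for $N=1$ one should instead observe that $|z(t)|^{p-2}$ is integrable in $t$ near a zero of $z$ (since $p-2>-1$), so the fundamental theorem of calculus applies directly --- or simply embed $\mathbb{R}^1$ into $\mathbb{R}^2$. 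Second, it would be worth stating explicitly that the constant in the $p\ge2$ case can be taken as $\tfrac{2^{2-p}}{p-1}$ and in the $1<p<2$ case as $p-1$, since the lemma asserts uniformity of $c$ over all $x,y$.
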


We define the truncation $T$ by 
\[
Ts:=s, \ |s|\leq 1; \quad Ts:=s/|s| , \ |s|>1.
\]
The following theorem is a variant of a result which may be found e.g.\ in \cite{2}, \cite{5}, \cite{rak}. Here we provide a very simple argument, similar to that in \cite{5}.

\begin{theo} \label{theo2.2} Let $1<q\leq p$ and $(u_n) \subset W^{1,p}_{\textrm{loc}} (\Om)$ be such that for all $\omega \subset \subset \ \Om$
\begin{enumerate}
\item[(a)]   $\dis     \sup_n \ \|u_n\|_{W^{1,q}(\omega)} < +\infty$,
\item[(b)]   $\dis      \lim_{m,n \to\infty} \int_\omega \bigl(|\nabla u_m|^{p-2} \nabla u_m -\nabla u_n|^{p-2} \nabla u_n\bigr) \cdot \nabla T(u_m-u_n) \,dx = 0$.
\end{enumerate}
Then there exists a subsequence $(n_k)$ and $u\in W^{1,q}_{\textrm{loc}} (\Om)$ such that 
\begin{equation} \label{eq2.1}
u_{n_k} \to u\quad \text{and} \quad \nabla u_{n_k} \to \nabla u \quad \text{almost everywhere on }\Om.
\end{equation}
\end{theo}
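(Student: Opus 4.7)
The plan is to extract successive subsequences so as to obtain first convergence in measure of the gradients, then pointwise convergence on compact subsets.

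First, since $(u_n)$ is bounded in $W^{1,q}(\omega)$ for every $\omega\subset\subset\Om$, a diagonal argument with Rellich--Kondrachov yields a subsequence (not relabelled) such that $u_n\to u$ in $L^q_{\textrm{loc}}(\Om)$ and a.e.\ on $\Om$, and $\nabla u_n\rightharpoonup \nabla u$ weakly in $L^q_{\textrm{loc}}(\Om)$, for some $u\in W^{1,q}_{\textrm{loc}}(\Om)$. In particular, for each $\omega\subset\subset\Om$, $|\{x\in\omega:|u_m(x)-u_n(x)|>1\}|\to 0$ as $m,n\to\infty$.

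Next I convert assumption (b) into an $L^q$-type Cauchy estimate for $\nabla u_n$ on the set $A_{m,n}:=\{|u_m-u_n|\le 1\}\cap\omega$, where $\nabla T(u_m-u_n)=\nabla(u_m-u_n)$. If $p\ge 2$, Lemma \ref{lem2.1} gives directly
\[
c\int_{A_{m,n}}|\nabla u_m-\nabla u_n|^p\,dx \le \int_\omega\bigl(|\nabla u_m|^{p-2}\nabla u_m-|\nabla u_n|^{p-2}\nabla u_n\bigr)\cdot\nabla T(u_m-u_n)\,dx,
\]
which tends to $0$. If $1<p<2$, Lemma \ref{lem2.1} together with H\"older's inequality with exponents $2/q$ and $2/(2-q)$ (note $q\le p<2$, so $(2-p)q/(2-q)\le q$) yields
\[
\int_{A_{m,n}}|\nabla u_m-\nabla u_n|^q\,dx \le C\,I_{m,n}^{q/2}\left(\int_\omega (|\nabla u_m|+|\nabla u_n|)^q\,dx\right)^{(2-q)/2},
\]
where $I_{m,n}$ denotes the integral in (b); the second factor is bounded by (a), so the left-hand side still tends to $0$.

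From here convergence in measure of $(\nabla u_n)$ on $\omega$ follows: given $\delta>0$, writing $s=\min(p,q)$ and using Chebyshev on $A_{m,n}$,
\[
\bigl|\{x\in\omega:|\nabla u_m-\nabla u_n|>\delta\}\bigr|\le \bigl|\{|u_m-u_n|>1\}\bigr|+\delta^{-s}\int_{A_{m,n}}|\nabla u_m-\nabla u_n|^s\,dx,
\]
and both terms vanish as $m,n\to\infty$. A standard diagonal extraction then produces a further subsequence $(n_k)$ along which $\nabla u_{n_k}$ converges a.e.\ on $\Om$ to some measurable $g$; the weak $L^q_{\textrm{loc}}$-limit forces $g=\nabla u$, so \eqref{eq2.1} holds.

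The main obstacle is the case $1<p<2$: the comparison function $(|x|+|y|)^{p-2}$ in Lemma \ref{lem2.1} is degenerate, so one cannot bound $|\nabla u_m-\nabla u_n|^p$ directly. The H\"older trick above, which crucially uses $q\le p$ to keep $(|\nabla u_m|+|\nabla u_n|)^{(2-p)q/(2-q)}$ in $L^1_{\textrm{loc}}$, is the key device that reduces this case to the easier $p\ge 2$ situation.
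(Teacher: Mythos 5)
Your proof is correct, and it reaches the a.e.\ convergence of the gradients by a route whose middle step differs from the paper's. The paper also works with the sets $\{|u_m-u_n|<1\}$ and Lemma \ref{lem2.1}, but it argues pointwise: writing $e_{m,n}:=\bigl(|\nabla u_m|^{p-2}\nabla u_m-|\nabla u_n|^{p-2}\nabla u_n\bigr)\cdot\nabla(u_m-u_n)\ge 0$, it deduces from hypothesis (b) and $\chi_{\{|u_m-u_n|<1\}}\to 1$ a.e.\ that, along a subfamily, $e_{m,n}\to 0$ a.e., and then Lemma \ref{lem2.1} gives $|\nabla u_m(x)-\nabla u_n(x)|\to 0$ at a.e.\ fixed $x$ (for $1<p<2$ this uses the small observation that $e_{m,n}(x)\to 0$ already forces $(\nabla u_n(x))_n$ to be bounded, so the degenerate factor $(|\nabla u_m|+|\nabla u_n|)^{2-p}$ is harmless pointwise). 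You instead stay at the integral level, proving that the truncated gradient differences are Cauchy in $L^q$ and hence that $(\nabla u_n)$ is Cauchy in measure on $\omega$; your H\"older interpolation with exponents $2/q$ and $2/(2-q)$, which uses $q\le p$ to keep $(|\nabla u_m|+|\nabla u_n|)^{(2-p)q/(2-q)}$ locally integrable, is precisely the device that replaces the paper's pointwise boundedness remark, and it is the only place where hypothesis (a) enters quantitatively. Both arguments are sound; yours is a bit longer but avoids any a.e.\ extraction for the doubly indexed family $e_{m,n}$. Two cosmetic points: the second H\"older factor should be $\bigl(\int_\omega(|\nabla u_m|+|\nabla u_n|)^{(2-p)q/(2-q)}\,dx\bigr)^{(2-q)/2}$, which you then dominate by $\bigl(|\omega|+\int_\omega(|\nabla u_m|+|\nabla u_n|)^q\,dx\bigr)^{(2-q)/2}$ since the exponent is at most $q$ and $\omega$ has finite measure; and since $q\le p$ your $s=\min(p,q)$ equals $q$ in all cases, so for $p\ge 2$ you should either apply Chebyshev with exponent $p$ or first pass from the $L^p$ to the $L^q$ estimate on $A_{m,n}$ by H\"older on the bounded set $\omega$. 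Neither affects the validity of the argument.
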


\begin{proof}  It suffices to prove that for all $\omega \subset \subset  \Om$ there exist a subsequence $(n_k)$ and $u\in W^{1,q}(\omega)$ satisfying \eqref{eq2.1} a.e. on $\omega$ and to use a diagonal argument.

Let $\omega \subset \subset  \Om$. By assumption (a), extracting if necessary a subsequence, we can assume that for some $u\in W^{1,q}(\omega)$,
\[
u_n \to u \   \textrm{in} \ L^q (\omega),\quad u_n \to u  \  \textrm{a.e.} \ \textrm{on} \ \omega.
\]
Let us define
\[
\begin{aligned}
E_{m,n} & :=  \{x \in \omega : |u_m(x)-u_n(x)| < 1 \},
\\
e_{m,n} & :=  \left(|\nabla u_m|^{p-2} \nabla u_m - |\nabla u_n|^{p-2} \nabla u_n\right) \cdot \nabla (u_m-u_n).
\end{aligned}
\]
Then $e_{m,n}\ge 0$ and by assumption (b),
\[
\lim_{m,n\to\infty} \int_\omega e_{m,n} \chi_{E_{m,n}} \,dx = 0.
\]
Since $\chi_{E_{m,n}}\to 1$ a.e. on $\omega$ as $m,n\to\infty$, it follows, extracting if necessary a subfamily, that $e_{m,n}\to 0$ a.e. on $\omega$ as $m,n\to\infty$.   By Lemma \ref{lem2.1}, $|\nabla u_m -\nabla u_n|\to 0$ a.e. on $\omega$ as $m,n\to\infty$.   Hence $\nabla u_n \to v$ a.e. on $\omega$.    Since by assumption (a),
\[
\sup_n \|\nabla u_n\|_{L^q (\omega,\mathbb{R}^N)} < \infty,
\]
it follows from Proposition 5.4.7 in \cite{11} that 
\[
\nabla u_n  \rightharpoonup v \ \textrm{in} \ L^q (\omega,\mathbb{R}^N).
\]
We conclude that $v=\nabla u$. 
\end{proof}

\section{Approximate problems} \label{approx}

In this section we assume that \eqref{eq1.1} is satisfied, $V\in L^1_{\textrm{loc}}(\Omega)$ and $f\in W^{-1,p'} (\Om)$.   Let $0<\varepsilon < 1$.   We shall apply Ekeland's variational principle to the functional
\[
\varphi(u) := {1\over p} \int_\Om |\nabla u|^p \,dx -{1-\varepsilon\over p} \int_\Om V|u|^p \,dx + {\varepsilon\over p} \int_\Om |u|^p \,dx -\langle f,u\rangle
\]
defined on $W^{1,p}_0 (\Om)$.   By \eqref{eq1.1}, for every $u\in W^{1,p}_0 (\Om)$, $V|u|^p \in L^1 (\Om)$ and 
\[
\int_\Om V|u|^p \,dx \leq \int_\Om |\nabla u|^p \,dx.
\]
Moreover, the functional
\[
u \mapsto \int_\Om V|u|^p \,dx
\]
is continuous and G\^ateaux-differentiable on $W^{1,p}_0 (\Om)$ (see \cite{11}, Theorem 5.4.1).    It is clear that, on $W^{1,p}_0(\Om)$,
\begin{equation} \label{eq3.1}
-\|f\|_{W^{-1,p'}(\Om)} \|u\|_{W^{1,p}(\Om)} + {\varepsilon\over p} \|u\|^p_{W^{1,p}(\Om)} \leq \varphi(u).
\end{equation}
Hence $\varphi$ is bounded below and by Ekeland's variational principle (\cite{wi}, Theorem 2.4) there exists a sequence $(u_n)\subset W^{1,p}_0 (\Om)$ such that
\begin{equation} \label{eq3.2}
\varphi (u_n) \to c := \displaystyle \inf_{W^{1,p}_0(\Om)} \varphi \quad\text{and}\quad \varphi' (u_n)\to 0 \ \textrm{in} \ W^{-1,p'}(\Om).
\end{equation}
We deduce from \eqref{eq3.1} that
\begin{equation} \label{eq3.3}
\sup_n \|u_n\|_{W^{1,p}(\Om)} < +\infty.
\end{equation}
Going if necessary to a subsequence, we can assume the existence of $u\in W^{1,p}_0(\Om)$ such that 
\begin{equation} \label{eq3.4}
u_n \to u \quad \text{a.e. on } \Om.
\end{equation}

\begin{lem} \label{lem3.1}  Let $\zeta \in \mathcal{D}(\Om)$.   Then
\[
\lim_{m,n\to\infty} \int_\Om \bigl[|\nabla u_m|^{p-2} \nabla u_m - |\nabla u_n|^{p-2} \nabla u_n\bigr] \cdot \zeta \nabla T(u_m-u_n) \,dx = 0.
\]
\end{lem}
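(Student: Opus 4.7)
The plan is to test $\varphi'(u_m)-\varphi'(u_n)$ against $v_{m,n}:=\zeta T(u_m-u_n)$, which lies in $W^{1,p}_0(\Om)$ since $\zeta\in\mathcal{D}(\Om)$ and $T(u_m-u_n)\in W^{1,p}(\Om)$. The chain rule gives $\nabla T(u_m-u_n)=\chi_{\{|u_m-u_n|<1\}}\nabla(u_m-u_n)$, so
\[
|v_{m,n}|\le |\zeta|,\qquad |\nabla v_{m,n}|\le |\nabla \zeta|+|\zeta|\bigl(|\nabla u_m|+|\nabla u_n|\bigr).
\]
By \eqref{eq3.3}, $\sup_{m,n}\|v_{m,n}\|_{W^{1,p}(\Om)}<\infty$, hence \eqref{eq3.2} yields $\langle\varphi'(u_m)-\varphi'(u_n),v_{m,n}\rangle\to 0$ as $m,n\to\infty$.

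Writing out this pairing explicitly (the duality pairings with $f$ cancel) and using $\nabla v_{m,n}=T(u_m-u_n)\nabla\zeta+\zeta\nabla T(u_m-u_n)$, the integral in the statement equals
\[
\langle\varphi'(u_m)-\varphi'(u_n),v_{m,n}\rangle -A_{m,n}+(1-\varepsilon)B_{m,n}-\varepsilon C_{m,n},
\]
where
\begin{align*}
A_{m,n}&:=\int_\Om \bigl(|\nabla u_m|^{p-2}\nabla u_m-|\nabla u_n|^{p-2}\nabla u_n\bigr)\cdot T(u_m-u_n)\nabla\zeta\,dx,\\
B_{m,n}&:=\int_\Om V\bigl(|u_m|^{p-2}u_m-|u_n|^{p-2}u_n\bigr)v_{m,n}\,dx,\\
C_{m,n}&:=\int_\Om \bigl(|u_m|^{p-2}u_m-|u_n|^{p-2}u_n\bigr)v_{m,n}\,dx.
\end{align*}
It thus suffices to prove $A_{m,n},B_{m,n},C_{m,n}\to 0$.

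For $A_{m,n}$, the fields $|\nabla u_n|^{p-2}\nabla u_n$ are uniformly bounded in $L^{p'}(\Om)$ by \eqref{eq3.3}. By \eqref{eq3.4}, $u_m-u_n\to 0$ a.e., so $T(u_m-u_n)\to 0$ a.e.; since it is dominated by $1$ and $|\nabla\zeta|\in L^p$ has compact support, dominated convergence gives $T(u_m-u_n)\nabla\zeta\to 0$ in $L^p(\Om)$, and Hölder's inequality finishes the term.

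For $B_{m,n}$ and $C_{m,n}$ I would invoke Vitali's convergence theorem. Both integrands tend to $0$ a.e.\ by \eqref{eq3.4} and continuity of $s\mapsto|s|^{p-2}s$. Uniform integrability of the dominating family $V(|u_m|^{p-1}+|u_n|^{p-1})|\zeta|$ follows from Hölder,
\[
\int_E V|u_m|^{p-1}|\zeta|\,dx\le \Bigl(\int_\Om V|u_m|^p\,dx\Bigr)^{1/p'}\Bigl(\int_E V|\zeta|^p\,dx\Bigr)^{1/p},
\]
combined with the uniform bound $\int_\Om V|u_m|^p\,dx\le \int_\Om |\nabla u_m|^p\,dx\le C$ (from \eqref{eq1.1} extended to $W^{1,p}_0(\Om)$ by density, and \eqref{eq3.3}) and the fact that $V|\zeta|^p\in L^1(\Om)$ since $V\in L^1_{\mathrm{loc}}(\Om)$; the analogous estimate for $C_{m,n}$ uses the simpler bound $\int_E |u_m|^{p-1}|\zeta|\,dx\le \|u_m\|_p^{p-1}\bigl(\int_E|\zeta|^p\,dx\bigr)^{1/p}$. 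The main obstacle is $B_{m,n}$: under the weak local integrability hypothesis on $V$ the only uniform control on $V|u_m|^{p-1}$ comes through the Hardy-type inequality built into \eqref{eq1.1}, which is exactly what the above Hölder splitting is designed to exploit.
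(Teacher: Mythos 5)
Your proof is correct and follows essentially the same route as the paper: both test the difference of the approximate Euler--Lagrange equations (equivalently, $\varphi'(u_m)-\varphi'(u_n)$) against $\zeta T(u_m-u_n)$ and control the three residual terms using \eqref{eq3.3}, \eqref{eq3.4}, the bound $\int_\Om V|u|^p\,dx\le\int_\Om|\nabla u|^p\,dx$ from \eqref{eq1.1}, and H\"older's inequality. The only cosmetic difference is that you dispatch the $B_{m,n}$ and $C_{m,n}$ terms via Vitali's theorem with uniform integrability, where the paper writes each directly as a product of a bounded factor and a factor tending to zero.
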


\begin{proof} Because of \eqref{eq3.2}, we have that 
\begin{equation} \label{eq3.5}
-\Delta_p u_n - (1-\varepsilon) V |u_n|^{p- 2} u_n + \varepsilon |u_n|^{p-   2} u_n = f+g_n,
\end{equation}
where $g_n \to 0$ in $W^{-1,p'}(\Om)$.       Testing \eqref{eq3.5} with $\zeta T(u_n-u_m)$, we see that it suffices to prove that
\begin{align}
&\lim_{m,n\to\infty} \int_\Om |\nabla u_m|^{p-1} |\nabla \zeta| \ |T(u_m-u_n)| \,dx = 0, \label{a1}
\\
&\lim_{m,n\to\infty} \int_\Om | u_m|^{p-1} | \zeta| \ |T(u_m-u_n)| \,dx = 0, \label{a2}
\\
&\lim_{m,n\to\infty} \int_\Om V| u_m|^{p-1} | \zeta| \ |T(u_m-u_n)| \,dx = 0. \label{a3}
\end{align}
By \eqref{eq3.4} and the fact that $|T(u_n-u_m)|\le 1$, $\lim_{m,n\to\infty}\int_{\textrm{spt}\, \zeta}|T(u_n-u_m)|\,dx = 0$. Hence 
\[
\lim_{m,n\to\infty} \int_{\textrm{spt}\, \zeta}  |T(u_m-u_n)|^{p} \,dx = 0,  \quad \lim_{m,n\to\infty} \int_{\textrm{spt}\,\zeta}  V|T(u_m-u_n)|^p \,dx = 0
\]
and \eqref{a1}, \eqref{a2} follow from \eqref{eq3.3} and H\"older's inequality. Since
\[
\int_{\textrm{spt}\, \zeta}V|u_m|^{p-1}|T(u_m-u_n)| \,dx \le \left(\int_{\textrm{spt}\, \zeta}V|u_m|^p\,dx\right)^{(p-1)/p} \left(\int_{\textrm{spt}\, \zeta}V|T(u_m-u_n)|^p\,dx\right)^{1/p}, 
\]
using \eqref{eq1.1} and \eqref{eq3.3} also \eqref{a3} follows.
\end{proof}

\begin{theo}\label{theo3.2}  There exists $u\in W^{1,p}_0(\Om)$ such that $\varphi(u)= \displaystyle \inf_{W^{1,p}_0(\Om)} \varphi$ and $\varphi'(u)=0$. 
\end{theo}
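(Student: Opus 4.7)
The plan is to take the Ekeland sequence $(u_n)$ from \eqref{eq3.2}--\eqref{eq3.4} and show that its pointwise limit $u$ is in fact a minimizer at the level $c$. The strategy has three steps: first, upgrade \eqref{eq3.4} to a.e.\ convergence of gradients via Theorem \ref{theo2.2}; second, pass to the limit in \eqref{eq3.5} to obtain $\varphi'(u)=0$; third, use the resulting Euler-Lagrange identity together with $\langle\varphi'(u_n),u_n\rangle=o(1)$ to identify $\varphi(u)$ with $c$.

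For the gradient convergence I would apply Theorem \ref{theo2.2} with $q=p$. Hypothesis (a) is immediate from \eqref{eq3.3}. For hypothesis (b), fix $\omega\subset\subset\Omega$ and pick a nonnegative $\zeta\in\mathcal{D}(\Omega)$ with $\zeta\equiv 1$ on $\omega$. Because
\[
\bigl(|\nabla u_m|^{p-2}\nabla u_m - |\nabla u_n|^{p-2}\nabla u_n\bigr)\cdot\nabla T(u_m-u_n) \geq 0
\]
pointwise, by the monotonicity that underlies Lemma \ref{lem2.1}, the integral over $\omega$ is dominated by the $\zeta$-weighted integral over $\Omega$, which vanishes by Lemma \ref{lem3.1}. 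Theorem \ref{theo2.2} then delivers a subsequence, still denoted $(u_n)$, with $\nabla u_n\to\nabla u$ a.e.\ on $\Omega$.

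Next I would test \eqref{eq3.5} against an arbitrary $v\in\mathcal{D}(\Omega)$ and pass to the limit. By reflexivity, \eqref{eq3.3} combined with the two a.e.\ convergences gives $|\nabla u_n|^{p-2}\nabla u_n\rightharpoonup|\nabla u|^{p-2}\nabla u$ in $L^{p'}(\Omega;\mathbb{R}^N)$ and $|u_n|^{p-2}u_n\rightharpoonup |u|^{p-2}u$ in $L^{p'}(\Omega)$. The $V$-term is the delicate one: writing $V|u_n|^{p-2}u_n\,v=\bigl(V^{1/p'}|u_n|^{p-2}u_n\bigr)\bigl(V^{1/p}v\bigr)$, the first factor is bounded in $L^{p'}(\textrm{spt}\,v)$ thanks to \eqref{eq1.1} and \eqref{eq3.3} and converges a.e., while $V^{1/p}v\in L^p(\textrm{spt}\,v)$ since $V\in L^1_{\textrm{loc}}(\Omega)$; a H\"older-Vitali argument then yields $\int_\Omega V|u_n|^{p-2}u_n\,v\,dx\to\int_\Omega V|u|^{p-2}u\,v\,dx$. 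Passing to the limit in \eqref{eq3.5} gives the equation in $\mathcal{D}'(\Omega)$, hence in $W^{-1,p'}(\Omega)$ by density, i.e.\ $\varphi'(u)=0$.

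To close, I would test $\varphi'(u)=0$ against $u$ itself to obtain
\[
\int_\Omega|\nabla u|^p\,dx - (1-\varepsilon)\int_\Omega V|u|^p\,dx + \varepsilon\int_\Omega|u|^p\,dx = \langle f,u\rangle,
\]
which after multiplying by $1/p$ and subtracting $\langle f,u\rangle$ gives $\varphi(u)=-\frac{p-1}{p}\langle f,u\rangle$. The same manipulation applied to $\varphi(u_n)$, using $\langle\varphi'(u_n),u_n\rangle=o(1)$ from \eqref{eq3.2}--\eqref{eq3.3}, gives $\varphi(u_n)=-\frac{p-1}{p}\langle f,u_n\rangle+o(1)$. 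Since \eqref{eq3.3} yields $u_n\rightharpoonup u$ in $W^{1,p}_0(\Omega)$ and $f\in W^{-1,p'}(\Omega)$, $\langle f,u_n\rangle\to\langle f,u\rangle$, and therefore $c=\lim\varphi(u_n)=\varphi(u)$. The main obstacle is the weak convergence of $V|u_n|^{p-2}u_n$: since $V$ is only $L^1_{\textrm{loc}}$ and $\Omega$ may be unbounded, this hinges crucially on the uniform bound $\int_\Omega V|u_n|^p\,dx\leq\int_\Omega|\nabla u_n|^p\,dx$ provided by \eqref{eq1.1}, without which the limit passage would fail.
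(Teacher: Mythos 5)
Your proposal is correct and follows essentially the same route as the paper: a.e.\ convergence of gradients via Lemma \ref{lem3.1} and Theorem \ref{theo2.2} with $q=p$, passage to the limit in \eqref{eq3.5} using boundedness plus a.e.\ convergence (Proposition 5.4.7 of \cite{11}), and the homogeneity identity $\varphi(v)-\tfrac1p\langle\varphi'(v),v\rangle=(\tfrac1p-1)\langle f,v\rangle$ to identify $\varphi(u)$ with $c$. You merely spell out details the paper leaves implicit (the positivity of the integrand linking Lemma \ref{lem3.1} to hypothesis (b), and the splitting $V^{1/p'}|u_n|^{p-2}u_n\cdot V^{1/p}v$ for the potential term), all of which are sound.
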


\begin{proof} Assumption (b) of Theorem \ref{theo2.2} (with $q=p$) follows from Lemma \ref{lem3.1}.     Extracting a subsequence, we can assume that 
\begin{equation} \label{eq3.6}
\nabla u_n \to \nabla u \quad \text{a.e. on } \Om.
\end{equation}
By \eqref{eq3.5} we have that, for every $\zeta \in \mathcal{D}(\Om)$, 
\[
\int_\Om |\nabla u_n|^{p-2} \nabla u_n \cdot \nabla \zeta \,dx-(1-\varepsilon) \int_\Om V|u_n|^{p-2} u_n \zeta \,dx + \varepsilon \int_\Om |u_n|^{p-2} u_n \zeta \,dx = \langle f+ g_n,\zeta\rangle.
\]
Using \eqref{eq3.3}, \eqref{eq3.4}, \eqref{eq3.6} and Proposition 5.4.7 in \cite{11}, we obtain  
\[
\int_\Om |\nabla u|^{p-2} \nabla u \cdot \nabla \zeta \,dx-(1-\varepsilon) \int_\Om V|u|^{p-2} u \zeta \,dx + \varepsilon \int_\Om |u|^{p-2} u \zeta \,dx = \langle f, \zeta\rangle,
\]
so that $\varphi'(u)=0$.   As in \cite{7}, the homogeneity of $\mathcal{Q}_V$ implies
\begin{align*}
\inf_{W^{1,p}_0 (\Om)} \varphi &=  \lim_{n\to\infty} \varphi (u_n) \\
&= \lim_{n\to\infty}  \left[{1\over p} \langle \varphi' (u_n),u_n \rangle +\left({1\over p}-1\right) \langle f,u_n\rangle\right] \\
&= \left({1\over p}-1\right)  \langle f,u\rangle 
= \varphi (u) - {1\over p} \langle \varphi'(u),u\rangle = \varphi (u).
\end{align*}
\end{proof}

\section{Main result} \label{main}

In this section we assume that (A) is satisfied and we define, on $\mathcal{D}'(\Om)$,
\[
\|f\| := \sup \{\langle f,u \rangle : u\in \mathcal{D}(\Om), \ \mathcal{Q}_V (u)=1\}
\]
so that
\begin{equation} \label{eq4.1}
\langle f,u \rangle \leq \|f\| \ [\mathcal{Q}_V(u)]^{1/p}.
\end{equation}
On the spaces
\[
X:=\{f\in \mathcal{D}'(\Om) : \|f\| < \infty \} \quad\text{and}\quad Y:=W^{1,q}_0 (\Om,Wdx)
\]
we respectively use the norm defined above and the natural norm. Note that the space $X$ has been introduced by Tak\'a\v{c} and Tintarev in \cite{9}.

\begin{lem} \label{lem4.1}   Let $f\in Y^\ast$. Then $f\in X$ and
\[
\|f\| \leq \|f\|_{Y^\ast}.
\]
\end{lem}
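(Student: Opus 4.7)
The plan is to recognize that inequality \eqref{eq1.3} in assumption (A) is, word for word, a continuous embedding statement relating the $Y$-norm and the ``norm'' $[\mathcal{Q}_V]^{1/p}$ on $\mathcal{D}(\Om)$, and then to dualize this estimate using the definitions of $\|\cdot\|$ and $\|\cdot\|_{Y^\ast}$.

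First I would make explicit the norm on $Y=W^{1,q}_0(\Om,W\,dx)$, which is
\[
\|u\|_Y := \left(\int_\Om (|\nabla u|^q + |u|^q)W\,dx\right)^{1/q}.
\]
With this notation, \eqref{eq1.3} reads $\|u\|_Y^p \le \mathcal{Q}_V(u)$, i.e.\ $\|u\|_Y \le [\mathcal{Q}_V(u)]^{1/p}$ for every $u\in\mathcal{D}(\Om)$. Before using this I would verify the small preliminary point that $\mathcal{D}(\Om)\subset Y$ and that the restriction of $f\in Y^\ast$ to $\mathcal{D}(\Om)$ is actually an element of $\mathcal{D}'(\Om)$: since $W\in\mathcal{C}(\Om)$ is bounded on compact subsets of $\Om$, every $u\in\mathcal{D}(\Om)$ has $\|u\|_Y<\infty$, and if $u_n\to u$ in $\mathcal{D}(\Om)$ then the supports lie in a common compact $K\subset\Om$ on which $W$ is bounded and $u_n,\nabla u_n$ converge uniformly, so $\|u_n-u\|_Y\to 0$. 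Hence $f$, viewed on $\mathcal{D}(\Om)$, is sequentially continuous, i.e.\ a distribution.

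Next, for any $u\in\mathcal{D}(\Om)$ with $\mathcal{Q}_V(u)=1$, the embedding estimate gives $\|u\|_Y\le 1$, so
\[
\langle f,u\rangle \le \|f\|_{Y^\ast}\,\|u\|_Y \le \|f\|_{Y^\ast}.
\]
Taking the supremum over all such test functions yields $\|f\|\le \|f\|_{Y^\ast}<+\infty$, showing simultaneously that $f\in X$ and the claimed norm inequality.

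There is no real obstacle here: the content of the lemma is entirely contained in the embedding inequality \eqref{eq1.3}, and the proof is just the observation that \eqref{eq1.3} dualizes to an inclusion $Y^\ast\hookrightarrow X$ of norm at most~$1$. The only mildly delicate point, which I would handle in the verification above, is ensuring that an element of $Y^\ast$ does restrict to a genuine distribution on $\Om$, so that the statement ``$f\in X$'' is meaningful.
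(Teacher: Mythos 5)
Your proof is correct and is essentially the paper's own argument: apply \eqref{eq1.3} in the form $\|u\|_Y\le[\mathcal{Q}_V(u)]^{1/p}$ together with $\langle f,u\rangle\le\|f\|_{Y^\ast}\|u\|_Y$ and take the supremum over test functions with $\mathcal{Q}_V(u)=1$. The additional check that an element of $Y^\ast$ restricts to a genuine distribution is a reasonable extra precaution that the paper leaves implicit.
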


\begin{proof} Let $u\in \mathcal{D}(\Om)$.   By assumption \eqref{eq1.3} we have
\[
\langle f,u \rangle \leq \|f\|_{Y^\ast} \|u\|_Y \leq \|f\|_{Y^\ast} [\mathcal{Q}_V (u)]^{1/p}.
\]
\end{proof}

\begin{lem} \label{lem4.2}
\emph{(a)} Let $u\in W^{1,p}_0(\Om)$.   Then
\[
\|u\|_Y \leq \|u\|_{X^\ast} \leq [\mathcal{Q}_V (u)]^{1/p} \leq \|u\|_{W^{1,p}}(\Om).
\]
\emph{(b)} Let $f\in X$. Then $f\in W^{-1,p'}(\Om)$ and
\[
\|f\|_{W^{-1,p'}(\Om)}  \leq \|f\|.
\]
\end{lem}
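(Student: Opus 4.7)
The plan is to prove part (b) first, since (b) provides the continuous inclusion $X\hookrightarrow W^{-1,p'}(\Om)$ that is needed to make sense of $\|u\|_{X^*}$ when $u\in W^{1,p}_0(\Om)$. For (b), fix $f\in X$ and $u\in\mathcal{D}(\Om)$. Because $V\ge 0$, $\mathcal{Q}_V(u)\le \int_\Om|\nabla u|^p\,dx \le \|u\|_{W^{1,p}(\Om)}^p$, so \eqref{eq4.1} applied to $\pm u$ yields $|\langle f,u\rangle|\le \|f\|\,\|u\|_{W^{1,p}(\Om)}$. Density of $\mathcal{D}(\Om)$ in $W^{1,p}_0(\Om)$ then extends $f$ to a continuous functional on $W^{1,p}_0(\Om)$ with $\|f\|_{W^{-1,p'}(\Om)}\le \|f\|$.

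For part (a), the rightmost inequality $[\mathcal{Q}_V(u)]^{1/p}\le \|u\|_{W^{1,p}(\Om)}$ is immediate from $V\ge 0$. For the middle inequality $\|u\|_{X^*}\le [\mathcal{Q}_V(u)]^{1/p}$, \eqref{eq4.1} applied to $\pm u$ settles the case $u\in\mathcal{D}(\Om)$ directly, and for a general $u\in W^{1,p}_0(\Om)$ I would approximate $u$ by $u_n\in\mathcal{D}(\Om)$ in $W^{1,p}_0(\Om)$, pass to the limit in $\langle f,u_n\rangle$ using $f\in W^{-1,p'}(\Om)$ (from part~(b)) and in $\mathcal{Q}_V(u_n)$ using the continuity of $u\mapsto \int_\Om V|u|^p\,dx$ on $W^{1,p}_0(\Om)$ recorded at the start of Section~\ref{approx}.

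The leftmost inequality $\|u\|_Y\le \|u\|_{X^*}$ is the core of the statement and the place where the Hahn-Banach theorem enters. From \eqref{eq1.3} and the two inequalities already established, $\|u\|_Y\le [\mathcal{Q}_V(u)]^{1/p}\le \|u\|_{W^{1,p}(\Om)}$ on $\mathcal{D}(\Om)$, so $W^{1,p}_0(\Om)$ embeds continuously into $Y$ and in particular $u\in Y$. The Hahn-Banach theorem supplies $g\in Y^*$ with $\|g\|_{Y^*}=1$ and $\langle g,u\rangle_Y=\|u\|_Y$. By Lemma~\ref{lem4.1}, $g$ belongs to $X$ with $\|g\|\le\|g\|_{Y^*}=1$. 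The $Y$-pairing and the $W^{-1,p'}$-$W^{1,p}_0(\Om)$ pairing coincide on $\mathcal{D}(\Om)$ (both reduce there to the distributional bracket) and extend continuously to $u$ via the embeddings $\mathcal{D}(\Om)\hookrightarrow W^{1,p}_0(\Om)\hookrightarrow Y$, hence $\|u\|_Y=\langle g,u\rangle\le\|u\|_{X^*}$. The main obstacle is exactly this step: one needs a single functional that both attains the $Y$-norm of $u$ and lies in the smaller space $X$ with controlled $X$-norm, and the combination of Hahn-Banach with Lemma~\ref{lem4.1} is precisely the mechanism that produces it.
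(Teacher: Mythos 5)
Your proof is correct and follows essentially the same route as the paper: the key step $\|u\|_Y\le\|u\|_{X^\ast}$ is obtained in both cases from the Hahn--Banach duality formula (you phrase it via a norming functional, the paper via $\|u\|_Y=\sup_{\|f\|_{Y^\ast}\le1}\langle f,u\rangle$) combined with Lemma~\ref{lem4.1}, and the remaining inequalities come from \eqref{eq4.1}, $V\ge0$, and density of $\mathcal{D}(\Om)$. Your treatment of the density step and of part (b) is only slightly more explicit than the paper's.
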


\begin{proof} (a) Let $u\in \mathcal{D}(\Om)$. Using the Hahn-Banach theorem and the preceding lemma, we obtain
\[
\|u\|_Y = \sup_{f\in Y^\ast \atop \|f\|_{Y^\ast}\leq 1} \langle f,u \rangle  \leq  \sup_{f\in X \atop \|f\| \leq 1}    \langle f,u \rangle = \|u\|_{X^\ast}.
\]
It follows from \eqref{eq4.1} that
\[
\sup_{f\in X \atop \|f\|\leq 1} \langle f,u \rangle \leq [\mathcal{Q}_V (u)]^{1/p}.
\]
Since $V\geq 0$, it is clear that
\[
[\mathcal{Q}_V (u)]^{1/p} \leq \|u\|_{W^{1,p}(\Om)}.
\]
Now it is easy to conclude by density of $\mathcal{D}(\Om)$. 

(b) If $f\in X$ and $u\in \mathcal{D}(\Om)$, then
\[
\langle f,u \rangle \leq \|f\| [\mathcal{Q}_V(u)]^{1/p} \leq \|f\| \ \|u\|_{W^{1,p}(\Om)}. 
\]
\end{proof}

Let $f\in X$ and let $(\varepsilon_n)\subset \ ]0,1[$ be such that $\varepsilon_n \downarrow 0$.  Then $f\in W^{-1,p'}(\Omega)$, so  by Theorem \ref{theo3.2}, for every $n$ there exists $u_n\in W^{1,p}_0(\Om)$ such that
\begin{equation} \label{eq4.2}
-\Delta_p u_n - (1-\varepsilon_n)V |u_n|^{p-2} u_n + \varepsilon_n |u_n|^{p-2} u_n = f
\end{equation}
and $u_n$ minimizes the functional
\[
\varphi_n(v):={1\over p} \int_\Om |\nabla v|^p \,dx - {1-\varepsilon_n\over p} \int_\Om V|v|^p \,dx + {\varepsilon_n\over p} \int_\Om |v|^p  \,dx-\langle f,u \rangle.
\]
on $W^{1,p}_0 (\Om)$. In fact below we shall not use the minimizing property of $u_n$ but only the fact that \eqref{eq4.2} holds.

\begin{lem} \label{lem4.3}  Let $f\in X$.   Then
\[
\sup_n \|u_n\|_Y \leq \sup_n \mathcal{Q}_V (u_n) < \infty.
\]
\end{lem}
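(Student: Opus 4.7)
The natural plan is to test the approximate equation \eqref{eq4.2} with $u_n$ itself and then exploit \eqref{eq4.1} together with \eqref{eq1.3}.

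First I would take $u_n$ as a test function in \eqref{eq4.2}, which is legitimate since $u_n\in W^{1,p}_0(\Om)$ and $V|u_n|^p\in L^1(\Om)$ by \eqref{eq1.1} and the discussion at the beginning of Section \ref{approx}. After rearranging the terms, the identity reads
\[
\mathcal{Q}_V(u_n)+\varepsilon_n\int_\Om (V+1)|u_n|^p\,dx=\langle f,u_n\rangle.
\]
Since the second term on the left-hand side is non-negative, this yields the pointwise bound
\[
\mathcal{Q}_V(u_n)\le\langle f,u_n\rangle.
\]

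Next I would extend the inequality \eqref{eq4.1} from test functions to the energy space $W^{1,p}_0(\Om)$ by density. Given $u_n\in W^{1,p}_0(\Om)$, pick $(\varphi_k)\subset\mathcal{D}(\Om)$ with $\varphi_k\to u_n$ in $W^{1,p}_0(\Om)$. Then $\langle f,\varphi_k\rangle\to\langle f,u_n\rangle$ because $f\in X\subset W^{-1,p'}(\Om)$ by Lemma \ref{lem4.2}(b), and $\mathcal{Q}_V(\varphi_k)\to\mathcal{Q}_V(u_n)$ because the map $v\mapsto\int_\Om V|v|^p\,dx$ is continuous on $W^{1,p}_0(\Om)$ (as noted in Section \ref{approx}, using \cite{11}, Theorem 5.4.1). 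Passing to the limit in $\langle f,\varphi_k\rangle\le\|f\|[\mathcal{Q}_V(\varphi_k)]^{1/p}$ gives
\[
\langle f,u_n\rangle\le\|f\|[\mathcal{Q}_V(u_n)]^{1/p}.
\]

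Combining the two inequalities yields $\mathcal{Q}_V(u_n)\le\|f\|[\mathcal{Q}_V(u_n)]^{1/p}$, and assuming $\mathcal{Q}_V(u_n)>0$ (otherwise there is nothing to prove), dividing through gives $[\mathcal{Q}_V(u_n)]^{(p-1)/p}\le\|f\|$, i.e.
\[
\mathcal{Q}_V(u_n)\le\|f\|^{p/(p-1)}=\|f\|^{p'},
\]
which is uniform in $n$. The bound on $\|u_n\|_Y$ then comes directly from assumption \eqref{eq1.3}, which reads $\|u_n\|_Y^p\le\mathcal{Q}_V(u_n)$ in view of the definition of the norm on $Y=W^{1,q}_0(\Om,W\,dx)$; in particular $\sup_n\|u_n\|_Y\le\sup_n\mathcal{Q}_V(u_n)^{1/p}<\infty$, which yields the stated bound.

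The only real obstacle is the density step: one must check that \eqref{eq4.1}, stated a priori only for $u\in\mathcal{D}(\Om)$, survives the passage to $W^{1,p}_0(\Om)$. This is what makes \eqref{eq1.1}, together with the continuity of the $V$-term observed in Section \ref{approx}, essential. Once that is granted, the rest is a standard testing argument relying on the positivity of the $\varepsilon_n$-perturbation.
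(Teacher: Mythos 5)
Your proposal is correct and follows essentially the same route as the paper: test \eqref{eq4.2} with $u_n$, drop the non\-negative $\varepsilon_n$-term, bound $\langle f,u_n\rangle$ by $\|f\|\,[\mathcal{Q}_V(u_n)]^{1/p}$, and conclude $\mathcal{Q}_V(u_n)\le\|f\|^{p'}$ before invoking \eqref{eq1.3}. The only cosmetic difference is that the paper channels the estimates $\langle f,u_n\rangle\le\|f\|\,\|u_n\|_{X^\ast}$ and $\|u_n\|_Y\le\|u_n\|_{X^\ast}\le[\mathcal{Q}_V(u_n)]^{1/p}$ through Lemma \ref{lem4.2}, where the density argument you spell out explicitly has already been carried out.
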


\begin{proof}  Lemma \ref{lem4.2} and equation \eqref{eq4.2} imply that
\begin{align*}
\|u_n\|^p_{X^\ast} & \leq \mathcal{Q}_V (u_n) 
\leq \mathcal{Q}_V (u_n) + \varepsilon_n \int_\Om (V+1) |u_n|^p \,dx
\\
&= \langle f,u_n \rangle
\leq \|f\|_X \|u_n\|_{X^\ast}.
\end{align*}
Since $p>1$, we obtain the conclusion. 
\end{proof}

Going if necessary to a subsequence, we can assume the existence of $u\in Y$ such that 
\begin{equation} \label{eq4.3}
u_n \to u \quad \text{a.e. on }\Om.
\end{equation}

\begin{lem} \label{lem4.4} Let $\zeta\in \mathcal{D}(\Om)$.   Then
\[
\lim_{m,n \to \infty}   \int_\Om \left[|\nabla u_m |^{p-2} u_m - |\nabla u_n|^{p-2}   u_n\right] \cdot \zeta \nabla T (u_m-u_n) \,dx = 0.
\]
\end{lem}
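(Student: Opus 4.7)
The plan is to mirror the argument of Lemma~\ref{lem3.1}: test equation \eqref{eq4.2} with $\zeta T(u_m-u_n)$ (a valid element of $W^{1,p}_0(\Om)$ by the compact support of $\zeta$ and the Lipschitz character of $T$), written once for index $m$ and once for $n$, and subtract. Writing $w:=u_m-u_n$ and using the product rule $\nabla(\zeta T(w))=\zeta\nabla T(w)+T(w)\nabla\zeta$, the stated limit follows once one shows that the three auxiliary integrals
\begin{align*}
A_{m,n} &:= \int_\Om \bigl[|\nabla u_m|^{p-2}\nabla u_m - |\nabla u_n|^{p-2}\nabla u_n\bigr]\cdot T(w)\nabla\zeta\,dx,\\
B_{m,n} &:= \int_\Om V\bigl[(1-\varepsilon_m)|u_m|^{p-2}u_m - (1-\varepsilon_n)|u_n|^{p-2}u_n\bigr]\zeta T(w)\,dx,\\
C_{m,n} &:= \int_\Om \bigl[\varepsilon_m|u_m|^{p-2}u_m - \varepsilon_n|u_n|^{p-2}u_n\bigr]\zeta T(w)\,dx
\end{align*}
each tend to $0$ as $m,n\to\infty$; the two instances of $\langle f,\zeta T(w)\rangle$ on the right-hand sides cancel upon subtraction. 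Since $W$ is continuous and strictly positive on $\Om$, it is bounded below by some $W_0>0$ on $\textrm{spt}\,\zeta$, so Lemma~\ref{lem4.3} together with \eqref{eq1.3} supplies a uniform bound on $\|u_n\|_{W^{1,q}(\textrm{spt}\,\zeta)}$, whence Sobolev embedding gives a uniform bound on $\|u_n\|_{L^{q^*}(\textrm{spt}\,\zeta)}$ (reading $L^\infty$ if $N<q$, or $L^s$ with $s<\infty$ arbitrary if $N=q$).

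For $A_{m,n}$, H\"older's inequality applied with the conjugate pair $q/(p-1)$ and $q/(q-p+1)$ (both positive since $p-1<q$) bounds the integral by a uniform constant times $\|T(w)\nabla\zeta\|_{L^{q/(q-p+1)}(\textrm{spt}\,\zeta)}$; this tends to $0$ by dominated convergence using $|T(w)|\le 1$ and $T(w)\to 0$ a.e.\ (from \eqref{eq4.3}). For $C_{m,n}$, H\"older applied with the pair $q^*/(p-1)$ and $q^*/(q^*-p+1)$, combined with the explicit prefactor $\varepsilon_m$ (resp.\ $\varepsilon_n$) on each piece, gives $|C_{m,n}|\le C(\varepsilon_m+\varepsilon_n)\to 0$ because $\varepsilon_n\downarrow 0$.

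The term $B_{m,n}$ is the main obstacle: without a uniform $W^{1,p}$ bound on $u_n$ there is no obvious pointwise dominant for $V|u_m|^{p-1}$, so direct dominated convergence fails. The remedy is Vitali's convergence theorem. The integrand tends to $0$ a.e.\ because $T(w)\to 0$ a.e., and the exponent relation \eqref{eq1.2} combined with H\"older gives, for every measurable $A\subset\textrm{spt}\,\zeta$,
\[
\int_A V|u_m|^{p-1}|\zeta T(w)|\,dx \le \|V\|_{L^r(A)}\,\|u_m\|_{L^{q^*}(\textrm{spt}\,\zeta)}^{p-1}\,\|\zeta\|_\infty,
\]
whose right-hand side tends to $0$ as $|A|\to 0$ uniformly in $m,n$ by the absolute continuity of $\int V^r\,dx$ and the $L^{q^*}$-bound above. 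The symmetric estimate with $n$ in place of $m$ handles the $u_n$-piece, so the integrand of $B_{m,n}$ is equi-integrable on $\textrm{spt}\,\zeta$ and Vitali's theorem yields $B_{m,n}\to 0$. The exponent balance \eqref{eq1.2} is precisely what is needed to close this argument.
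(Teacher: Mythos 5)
Your proposal is correct and follows essentially the same route as the paper: test \eqref{eq4.2} with $\zeta T(u_m-u_n)$, reduce to the three auxiliary integrals, and control them via the $W^{1,q}_{\textrm{loc}}$ bound from Lemma \ref{lem4.3}, Sobolev embedding, and H\"older with the exponent relation \eqref{eq1.2}. The only cosmetic differences are that you dispose of the $\varepsilon$-term using the prefactors $\varepsilon_m,\varepsilon_n\to 0$ and package the $V$-term via equi-integrability and Vitali, where the paper instead applies dominated convergence to $\int_{\textrm{spt}\,\zeta}V^r|T(u_m-u_n)|^r\,dx$ before the same H\"older step.
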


\begin{proof}   Because of \eqref{eq4.2}, as in the proof of Lemma \ref{lem3.1} it suffices to show that
\[
\begin{aligned}
&\lim_{m,n\to\infty} \int_\Om |\nabla u_m|^{p-1} |\nabla \zeta| \ |T(u_m-u_n)|\,dx=0,
\\
&\lim_{m,n\to\infty} \int_\Om | u_m|^{p-1} | \zeta| \ |T(u_m-u_n)|\,dx=0,
\\
&\lim_{m,n\to\infty} \int_\Om V| u_m|^{p-1} |\zeta| \ |T(u_m-u_n)|\,dx=0.
\end{aligned}
\]
We assume that $N>q$.    The other cases are similar but simpler.    By Lemma \ref{lem4.3}, $(u_n)$ is bounded in $W^{1,q}_{\textrm{loc}} (\Om)$, so by the  Sobolev theorem, $(u_n)$ is bounded in $L^{q^\ast}_{\textrm{loc}} (\Om)$.  Since by \eqref{eq4.3}, 
\[
\lim_{m,n\to\infty} \int_{\textrm{spt} \, \zeta}  |T(u_m-u_n)|^{({q\over p-1})'}   \,dx=0 \quad{and}\quad \lim_{m,n\to\infty} \int_{\textrm{spt}\ \zeta} V^r |T(u_m-u_n)|^r   \,dx=0,
\]
it is easy to conclude using \eqref{eq1.2} and H\"older's inequality. Note in particular that
\[
\int_{\textrm{spt}\, \zeta}V|u_m|^{p-1}|T(u_m-u_n)| \,dx \le \left(\int_{\textrm{spt}\, \zeta}|u_m|^{q^*}\,dx\right)^{(p-1)/q^*} \left(\int_{\textrm{spt}\, \zeta}V^r|T(u_m-u_n)|^r\,dx\right)^{1/r}.
\]
\end{proof}

\begin{theo} \label{theo4.5}   Assume  (A) is satisfied and $f\in \mathcal{D}' (\Om)$ is such that
\begin{equation} \label{cond}
\sup \{ \langle f,u \rangle : u\in \mathcal{D}(\Om),\ \mathcal{Q}_V (u)=1\} < +\infty.
\end{equation}
Then there exists $u\in W^{1,q}_0 (\Om,Wdx)$ such that, in $\mathcal{D}' (\Om)$,
\begin{equation} \label{eq}
-\Delta_p u-V(x) |u|^{p-2} u = f(x).
\end{equation}
\end{theo}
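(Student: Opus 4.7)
The plan is to pass to the limit in the distributional identity \eqref{eq4.2} as $n\to\infty$, using the preparatory lemmas to extract good subsequences. First I would apply Theorem \ref{theo2.2} to the sequence $(u_n)\subset W^{1,p}_0(\Om)\subset W^{1,p}_{\textrm{loc}}(\Om)$: hypothesis (a) (with the given $q$) holds because Lemma \ref{lem4.3} gives a uniform $W^{1,q}(\Om,Wdx)$ bound and $W>0$ is continuous, hence bounded below on compact sets; hypothesis (b) is precisely Lemma \ref{lem4.4}. This yields, along a further subsequence, $u_{n_k}\to u$ and $\nabla u_{n_k}\to\nabla u$ almost everywhere on $\Om$, where $u\in Y=W^{1,q}_0(\Om,W\,dx)$ is already given by \eqref{eq4.3}.

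Next, fix $\zeta\in\mathcal{D}(\Om)$ and $\omega:=\textrm{spt}\,\zeta\subset\subset\Om$. Testing \eqref{eq4.2} against $\zeta$ gives
\[
\int_\Om |\nabla u_n|^{p-2}\nabla u_n\cdot\nabla\zeta\,dx - (1-\varepsilon_n)\int_\Om V|u_n|^{p-2}u_n\,\zeta\,dx + \varepsilon_n\int_\Om |u_n|^{p-2}u_n\,\zeta\,dx = \langle f,\zeta\rangle.
\]
For the gradient term, since $(\nabla u_n)$ is bounded in $L^q(\omega,\mathbb{R}^N)$ and $q>p-1$, the sequence $(|\nabla u_n|^{p-2}\nabla u_n)$ is bounded in $L^{q/(p-1)}(\omega,\mathbb{R}^N)$ with $q/(p-1)>1$; combined with the a.e.\ convergence above, Proposition 5.4.7 of \cite{11} gives weak convergence to $|\nabla u|^{p-2}\nabla u$ in $L^{q/(p-1)}(\omega,\mathbb{R}^N)$, and since $\nabla\zeta\in L^\infty\cap L^{(q/(p-1))'}(\omega,\mathbb{R}^N)$, the corresponding integral converges.

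For the $V$--term, I would work out the case $N>q$ (the cases $N=q$ and $N<q$ are analogous but easier): by Lemma \ref{lem4.3} and the Sobolev embedding, $(u_n)$ is bounded in $L^{q^\ast}_{\textrm{loc}}(\Om)$, hence $(|u_n|^{p-2}u_n)$ is bounded in $L^{q^\ast/(p-1)}(\omega)$ with $q^\ast/(p-1)>1$, so again a.e.\ convergence plus Proposition 5.4.7 gives weak convergence to $|u|^{p-2}u$ in that space. The relation $1/r+(p-1)/q^\ast=1$ from \eqref{eq1.2} means $V\zeta\in L^r(\omega)=L^{(q^\ast/(p-1))'}(\omega)$, so the second integral converges to $\int_\Om V|u|^{p-2}u\,\zeta\,dx$. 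The lower-order term $\varepsilon_n\int |u_n|^{p-2}u_n\,\zeta\,dx$ is bounded by H\"older in the same way and multiplied by $\varepsilon_n\downarrow0$, hence vanishes. Combining the three limits and using $\varepsilon_n\to 0$ in the coefficient $(1-\varepsilon_n)$, we obtain
\[
\int_\Om |\nabla u|^{p-2}\nabla u\cdot\nabla\zeta\,dx - \int_\Om V|u|^{p-2}u\,\zeta\,dx = \langle f,\zeta\rangle
\]
for every $\zeta\in\mathcal{D}(\Om)$, which is exactly \eqref{eq}.

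The delicate point is verifying that the a.e.\ limit identifies the weak limits of the nonlinear quantities $|\nabla u_n|^{p-2}\nabla u_n$ and $V|u_n|^{p-2}u_n$ in the right local Lebesgue spaces; this is where the assumption $p-1<q$ is essential, because it guarantees the exponents $q/(p-1)$ and $q^\ast/(p-1)$ exceed $1$ and Proposition 5.4.7 of \cite{11} applies. Once Theorem \ref{theo2.2} and Lemma \ref{lem4.4} have done the heavy lifting of producing a.e.\ convergence of the gradients, the remaining limit passage is a routine Hölder/equi-integrability argument dictated by \eqref{eq1.2}.
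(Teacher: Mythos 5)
Your proposal is correct and follows essentially the same route as the paper: a.e.\ convergence of the gradients via Theorem \ref{theo2.2} combined with Lemma \ref{lem4.4}, then identification of the weak limits of $|\nabla u_n|^{p-2}\nabla u_n$ and $|u_n|^{p-2}u_n$ in $L^{q/(p-1)}(\omega)$ and $L^{q^\ast/(p-1)}(\omega)$ via Proposition 5.4.7 of \cite{11}, and finally passage to the limit in \eqref{eq4.4} using the duality dictated by \eqref{eq1.2}. The only difference is that you spell out the verification of hypothesis (a) of Theorem \ref{theo2.2} and the vanishing of the $\varepsilon_n$-term, which the paper leaves implicit.
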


\begin{proof}    Let $\zeta \in \mathcal{D}(\Om)$.    By \eqref{eq4.2} we have 
\begin{equation} \label{eq4.4} 
\int_\Om |\nabla u_n|^{p-2} \nabla u_n \cdot \nabla \zeta \,dx-(1-\varepsilon_n) \int_\Om V |u_n|^{p-2} u_n \zeta \,dx + \varepsilon_n \int_\Om |u_n|^{p-2} u_n \zeta \,dx = \langle f,\zeta \rangle.
\end{equation}
Let us recall that $(u_n)$ is bounded in $W^{1,q}_{\textrm{loc}}(\Om)$ and $u_n\to u$ a.e. on $\Om$.    Assumption (b) of Theorem \ref{theo2.2} follows from Lemma \ref{lem4.4}.    Extracting a subsequence, we can assume that 
\[
\nabla u_n \to \nabla u \quad\text{a.e. on }\Om.
\]
We assume that $N>q$ and we choose $\omega$ such that
\[
\textrm{spt}\, \zeta \subset \omega \subset\subset \Om.
\]
Using Proposition 5.4.7 in \cite{11}, we obtain
\[
|\nabla u_n|^{p-2} \nabla u_n  \rightharpoonup |\nabla u|^{p-2} \nabla u \ \  \textrm{in} \ L^{{q\over p-1}} (\omega) \quad\text{and}\quad  
|u_n|^{p-2} u_n \rightharpoonup  |u|^{p-2}  u \ \ \textrm{in} \ L^{{q^\ast\over p-1}}(\omega).
\]
It follows then from \eqref{eq4.4} that
\[
\int_\Om |\nabla u|^{p-2} \nabla u \cdot \nabla \zeta \,dx - \int_\Om V|u|^{p-2} u \zeta \,dx = \langle f,\zeta \rangle. 
\]
\end{proof}

The following variant of Theorem \ref{theo4.5} will be needed in one of the applications in Section~\ref{appl}, see Theorem \ref{thmb}.

\begin{theo} \label{cor1} 
Theorem \ref{theo4.5} remains valid if we replace \eqref{eq1.2} (case $N>q$) in (A) by the conditions 
\begin{equation} \label{eqe}
\begin{aligned}
& V\in L^r_{\textrm{loc}}(\Omega) \quad \text{where }1/r+(p-1)(p-q)/q = 1 \\ 
 & \text{and} \quad \io V^{q/p}|u|^q\,dx \le C\io|\nabla u|^q\,dx  \quad \text{for some }C>0.
\end{aligned}
\end{equation}
\end{theo}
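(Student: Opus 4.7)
The plan is to revisit the proofs of Lemma \ref{lem4.4} and Theorem \ref{theo4.5} and notice that the exponent condition \eqref{eq1.2} in the case $N>q$ is used there only to control the term $\int V|u_m|^{p-1}|\zeta||T(u_m-u_n)|\,dx$ in Lemma \ref{lem4.4} and to pass to the limit in $\int V|u_n|^{p-2}u_n\zeta\,dx$ at the end of Theorem \ref{theo4.5}. The remaining terms involve only the local $W^{1,q}$-bound from Lemma \ref{lem4.3} and are already covered by the ``simpler'' cases $N\le q$. So it suffices to redo these two $V$-estimates under \eqref{eqe} in place of \eqref{eq1.2}.

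The preliminary step is to upgrade \eqref{eqe} to the uniform local bound $\sup_n\int_K V^{q/p}|u_n|^q\,dx<\infty$ for every compact $K\subset\Omega$. I would fix $\chi\in\mathcal{D}(\Omega)$ equal to $1$ on $K$, extend \eqref{eqe} from $\mathcal{D}(\Omega)$ to $W^{1,q}_0(\Omega)$ by density and Fatou's lemma, and apply the extended inequality to $\chi u_n$, whose $W^{1,q}$-norm is controlled in terms of $\|u_n\|_{W^{1,q}(\textrm{spt}\,\chi)}$ and hence by Lemma \ref{lem4.3}. This density/cutoff passage is the step most likely to need care.

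Granted this bound, H\"older's inequality with conjugate exponents $q/(p-1)$ and $q/(q-p+1)$ gives, with $r=q/(p(q-p+1))$ as in \eqref{eqe},
\[
\int_{\textrm{spt}\,\zeta}\! V|u_m|^{p-1}|T(u_m-u_n)|\,dx \le \Bigl(\int_{\textrm{spt}\,\zeta}\! V^{q/p}|u_m|^q\,dx\Bigr)^{\!(p-1)/q}\Bigl(\int_{\textrm{spt}\,\zeta}\! V^r|T(u_m-u_n)|^{q/(q-p+1)}\,dx\Bigr)^{\!(q-p+1)/q}.
\]
The first factor is bounded by the preliminary step; the second tends to $0$ by dominated convergence, its integrand being dominated by $V^r\in L^1_{\textrm{loc}}(\Omega)$ (since $|T|^{q/(q-p+1)}\le 1$) and converging to $0$ a.e.\ by \eqref{eq4.3}. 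This handles Lemma \ref{lem4.4}. For the final step of Theorem \ref{theo4.5}, the same splitting shows that $V^{(p-1)/p}|u_n|^{p-2}u_n$ is bounded in $L^{q/(p-1)}_{\textrm{loc}}(\Omega)$ and converges a.e.\ to $V^{(p-1)/p}|u|^{p-2}u$, hence converges weakly there (Proposition 5.4.7 of \cite{11}); pairing with $V^{1/p}\zeta\in L^{q/(q-p+1)}(\Omega)$ yields $\int V|u_n|^{p-2}u_n\zeta\,dx\to\int V|u|^{p-2}u\zeta\,dx$, and everything else in the proof is unchanged.
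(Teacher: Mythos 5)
Your proposal is correct and follows essentially the same route as the paper: the key step in both is the H\"older splitting of $V|u_m|^{p-1}$ with exponents $q/(p-1)$ and $q/(q-p+1)$, using \eqref{eqe} (localized via Lemma \ref{lem4.3}) to bound the factor $\bigl(\int V^{q/p}|u_m|^q\,dx\bigr)^{(p-1)/q}$ and $V^r\in L^1_{\textrm{loc}}(\Omega)$ to kill the factor containing $T(u_m-u_n)$. The only cosmetic difference is at the end: you pass to the limit in $\int_\Om V|u_n|^{p-2}u_n\zeta\,dx$ by pairing a weakly convergent sequence in $L^{q/(p-1)}$ with $V^{1/p}\zeta\in L^{q/(q-p+1)}$, whereas the paper derives uniform integrability of $V|u_n|^{p-1}$ from the same H\"older estimate and invokes Vitali's theorem -- two equivalent ways of finishing.
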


\begin{proof}
The argument is similar except that we must show that the third limit in the proof of Lemma \ref{lem4.4} is zero also when \eqref{eqe} holds and that we can pass to the limit in the second integral in \eqref{eq4.4}. Using Lemma \ref{lem4.3}, H\"older's inequality, \eqref{eqe} and the fact that $r=q/[(q-p+1)p]\ge 1$, we obtain
\begin{align*}
\lim_{m,n\to\infty} \int_{\textrm{spt} \, \zeta} V| u_m|^{p-1} |\zeta| & \, |T(u_m-u_n)|\,dx \le  \lim_{m,n\to\infty}\left(\int_{\textrm{spt} \, \zeta} V^{q/p}|u_m|^q\,dx\right)^{(p-1)/q} \times \\
& \times \left(\int_{\textrm{spt} \, \zeta} V^{r}|T(u_m-u_n)|^{pr}\,dx\right)^{1/(pr)} = 0.
\end{align*}
Let $E\subset \textrm{spt} \, \zeta$. Similarly as above, we have
\begin{align*}
\int_EV|u_n|^{p-1}\,dx  \le \left(\int_E V^{q/p}|u_n|^q\,dx\right)^{(p-1)/q} \left(\int_E V^{r}\,dx\right)^{1/(pr)} \le D \left(\int_E V^{r}\,dx\right)^{1/(pr)}.
\end{align*}
Since the integrand on the right-hand side is in $L^1(\textrm{spt} \, \zeta)$, it follows that $V|u_n|^{p-1}$ are uniformly integrable and we can pass to the limit in the second integral in \eqref{eq4.4} according to the Vitali theorem, see e.g. \cite[Theorem 3.1.9]{11}. 
\end{proof}

Note that in the case $q=p$ this result is stronger than Theorem \ref{theo4.5} because $V\in L^1_{\text{loc}}(\Omega)$ is allowed for any $p$.

\section{Poincar\'e inequality with remainder term} \label{po}

Let 
$\Omega := \omega\times \r^M$,   
where $\omega$ is a domain in $\r^{N-M}$ and $N>M>p$. For $x\in \Omega$ we shall write $x=(y,z)$, where $y\in\omega$ and $z\in \rm$.
Recall from the introduction that 
\[
\lambda_{1,p}(\Omega) := \inf\left\{\io|\nabla u|^p\,dx: u\in\calD(\Omega),\ \io|u|^p\,dx = 1\right\}.
\]
It is well known that $\lambda_{1,p}(\omega) = \lambda_{1,p}(\Omega)$ if $p=2$, see e.g. \cite{est}, Lemma 3. We shall show that this is also true for general $p\in(1,\infty)$.  

\begin{lem} \label{lem1}
$\lambda_{1,p}(\Omega) = \lambda_{1,p}(\omega)$.
\end{lem}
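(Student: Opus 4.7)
The plan is to prove the two inequalities $\lambda_{1,p}(\Omega) \ge \lambda_{1,p}(\omega)$ and $\lambda_{1,p}(\Omega) \le \lambda_{1,p}(\omega)$ separately. For the (easier) lower bound, I would fix an arbitrary $u \in \mathcal{D}(\Omega)$ normalized by $\int_\Omega |u|^p\,dx = 1$. For every $z \in \mathbb{R}^M$ the slice $y \mapsto u(y,z)$ belongs to $\mathcal{D}(\omega)$, so the very definition of $\lambda_{1,p}(\omega)$ gives
\[
\int_\omega |\nabla_y u(y,z)|^p\,dy \ge \lambda_{1,p}(\omega) \int_\omega |u(y,z)|^p\,dy.
\]
Integrating in $z$ by Fubini and using $|\nabla u|^p \ge |\nabla_y u|^p$ pointwise yields $\int_\Omega |\nabla u|^p\,dx \ge \lambda_{1,p}(\omega)$, whence $\lambda_{1,p}(\Omega) \ge \lambda_{1,p}(\omega)$.

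For the upper bound I would construct quasi-minimizers by a tensor-product-with-cutoff. Fix $\varphi \in \mathcal{D}(\omega)$ with $\|\varphi\|_{L^p(\omega)} = 1$, and for every $R \ge 1$ pick a smooth cutoff $\eta_R \in \mathcal{D}(\mathbb{R}^M)$ with $\eta_R \equiv 1$ on the ball $B_R$, $\mathrm{supp}\,\eta_R \subset B_{R+1}$ and $|\nabla \eta_R| \le C$ uniformly in $R$. Setting $c_R := \|\eta_R\|_{L^p(\mathbb{R}^M)}$ and $u_R(y,z) := \varphi(y)\eta_R(z)/c_R$ produces $u_R \in \mathcal{D}(\Omega)$ with $\|u_R\|_{L^p(\Omega)} = 1$, and
\[
\int_\Omega |\nabla u_R|^p\,dx = c_R^{-p} \int_\omega \int_{\mathbb{R}^M} \bigl(|\nabla \varphi|^2 \eta_R^2 + |\varphi|^2 |\nabla \eta_R|^2\bigr)^{p/2}\,dz\,dy.
\]
I would then split the $z$-integral into the bulk $\{|z| < R\}$, where $\eta_R = 1$ and $\nabla \eta_R = 0$ so the integrand reduces to $c_R^{-p}|\nabla \varphi|^p$, and the shell $\{R \le |z| \le R+1\}$, whose measure is of order $R^{M-1}$. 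Using the elementary bound $(A+B)^{p/2} \le 2^{p/2}(A^{p/2} + B^{p/2})$ on the shell, together with $c_R^p$ of order $R^M$, the bulk contribution tends to $\int_\omega |\nabla \varphi|^p\,dy$ and the shell contribution is $O(1/R)$. Letting $R \to \infty$ and then taking the infimum over $\varphi$ gives $\lambda_{1,p}(\Omega) \le \lambda_{1,p}(\omega)$.

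The main subtlety is the cross term in $|\nabla u_R|^p$: for $p \ne 2$ the inequality $(A+B)^{p/2} \le 2^{p/2}(A^{p/2} + B^{p/2})$ carries a multiplicative constant, so one cannot afford for the region in which $\eta_R$ and $\nabla \eta_R$ are simultaneously active to have measure comparable to the bulk. This rules out the tempting rescaling $\eta(z/R)$ (whose gradient and cutoff are both nontrivial on a shell of full $M$-dimensional measure), and forces the use of a cutoff whose gradient stays bounded independently of $R$, so that the shell occupies only an $O(1/R)$ fraction of the normalized mass and the bad constant disappears in the limit.
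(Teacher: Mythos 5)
Your proof is correct, and the lower bound $\lambda_{1,p}(\Omega)\ge\lambda_{1,p}(\omega)$ is argued exactly as in the paper (slicing, Fubini, and $|\nabla u|\ge|\nabla_y u|$). For the upper bound you take a genuinely different, more constructive route. The paper also tests with a product $u(y,z)=v(y)w(z)$, but instead of building explicit quasi-minimizers in $z$ it applies the Young-type inequality $(a+b)^p\le(1+\eps)a^p+C_\eps b^p$ to get
\[
\lambda_{1,p}(\Omega)\le(1+\eps)\,\frac{\int_\omega|\nabla_y v|^p\,dy}{\int_\omega|v|^p\,dy}+C_\eps\,\frac{\int_{\mathbb{R}^M}|\nabla_z w|^p\,dz}{\int_{\mathbb{R}^M}|w|^p\,dz},
\]
and then kills the second term by taking the infimum over $w$, using (implicitly) that $\lambda_{1,p}(\mathbb{R}^M)=0$; letting $\eps\downarrow0$ finishes. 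Your argument makes that last step concrete: your cutoffs $\eta_R$ (equal to $1$ on $B_R$, supported in $B_{R+1}$, with gradient bounded uniformly in $R$) are precisely a minimizing family for the Rayleigh quotient on $\mathbb{R}^M$, with the cross term controlled because the transition shell has relative measure $O(1/R)$. What the paper's version buys is that it never needs the geometry of the cutoff or a careful bulk/shell split; what yours buys is self-containedness and an explicit rate. One small quibble with your closing remark: the rescaled cutoff $\eta(z/R)$ is not actually ruled out --- its gradient term contributes $O(R^{-p})$ after normalization, and the ``bad'' multiplicative constant on the main term is removed by using the $(1+\eps)$/$C_\eps$ form of the elementary inequality rather than the crude $2^{p/2}$ bound; this is exactly the device the paper uses. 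So the constraint you describe is an artifact of insisting on the constant-$2^{p/2}$ inequality, not an intrinsic obstruction.
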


\begin{proof}
First we show that $\lambda_{1,p}(\Omega) \ge \lambda_{1,p}(\omega)$. Let $u\in\calD(\Omega)$, $\|u\|_{L^p(\Omega)}=1$. Then
\begin{align*}
\io|\nabla u|^p\,dx & \ge \io|\nabla_y u|^p\,dx = \irm dz\iom|\nabla_yu|^p\,dy \\
& \ge \irm dz \,\lambda_{1,p}(\omega)\iom|u|^p\,dy = \lambda_{1,p}(\omega)\io|u|^p\,dx = \lambda_{1,p}(\omega).
\end{align*}
Taking the infimum on the left-hand side we obtain the conclusion.

To show the reverse inequality, let $u(x)=v(y)w(z)$, where $v\in \calD(\omega)\setminus\{0\}$ and $w\in \calD(\r^M)\setminus\{0\}$. For each $\eps>0$ there exists $C_\eps>0$ such that 
\[
|\nabla u|^p \le (|w|\,|\nabla_yv|+|v|\,|\nabla_zw|)^p \le (1+\eps)|w|^p|\nabla_yv|^p + C_\eps |v|^p|\nabla_zw|^p. 
\]
Hence
\[
\lambda_{1,p}(\Omega)  \le \frac{\io|\nabla u|^p\,dx}{\io|u|^p\,dx} \le \frac{(1+\eps)\iom|\nabla_yv|^p\,dy}{\iom|v|^p\,dy} + \frac{C_\eps\irm|\nabla_zw|^p\,dz}{\irm|w|^p\,dz}.
\]
Taking the infimum with respect to $v$ and $w$, we obtain
\[
\lambda_{1,p}(\Omega)\le (1+\eps)\lambda_{1,p}(\omega).
\]
Since $\eps$ has been chosen arbitrarily, it follows that $\lambda_{1,p}(\Omega)\le \lambda_{1,p}(\omega)$.
\end{proof}

Now we state the main result of this section. 

\begin{theo} \label{poincare}
For each $u\in \calD(\Omega)$ the following holds: \\ 
\emph{(a)} If $p\ge 2$, then 
\[
\io(|\nabla u|^p-\lambda_{1,p}(\Omega)|u|^p)\,dx \ge \left(\frac{M-p}p\right)^p\io\frac{|u|^p}{|z|^p}\,dx.
\]
\emph{(b)} If $1<p<2$, then 
\[
\io(|\nabla u|^p-\lambda_{1,p}(\Omega)|u|^p)\,dx \ge 2^{(p-2)/2} \left(\frac{M-p}p\right)^p\io\frac{|u|^p}{|z|^p}\,dx.
\]
\end{theo}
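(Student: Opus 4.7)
The plan is to decompose $|\nabla u|^p=(|\nabla_y u|^2+|\nabla_z u|^2)^{p/2}$ via an elementary pointwise lower bound in $(|\nabla_y u|,|\nabla_z u|)$, and then apply two classical one-variable inequalities: the Poincar\'e inequality on $\omega$ (Lemma \ref{lem1}) slicewise in $y$, and the classical Hardy inequality on $\rm$ (valid because $M>p$) slicewise in $z$. Fubini's theorem then assembles the slicewise estimates into the integral over $\Om$.

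For part (a), $p\ge 2$: since $t\mapsto t^{p/2}$ is convex on $[0,\infty)$ and vanishes at $0$, it is super-additive, giving
\[
(a^2+b^2)^{p/2}\ge a^p+b^p.
\]
Applied to $a=|\nabla_y u|$, $b=|\nabla_z u|$, this splits the gradient energy cleanly; integrating and using Fubini together with Lemma \ref{lem1} on the $y$-piece and the Hardy inequality on the $z$-piece yields (a) directly.

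For part (b), $1<p<2$: super-additivity fails, but the sharp sub-additive substitute
\[
(a^2+b^2)^{p/2}\ge 2^{(p-2)/2}(a^p+b^p)
\]
holds, obtained by minimizing $(t^2+1)^{p/2}/(t^p+1)$ over $t\ge 0$ (the minimum $2^{(p-2)/2}$ is attained at $t=1$). Naive substitution would yield the factor $2^{(p-2)/2}$ in front of \emph{both} the Poincar\'e and the Hardy contributions. To retain the full constant $\la_{1,p}(\Om)$ on the Poincar\'e side, I would combine this sub-additive bound with the tangent-line inequality $(a^2+b^2)^{p/2}\ge a^p+\tfrac{p}{2}(a^2+b^2)^{p/2-1}b^2$ (coming from the concavity of $t\mapsto t^{p/2}$), splitting according to whether $|\nabla_y u|\le|\nabla_z u|$ or $|\nabla_y u|>|\nabla_z u|$: in the first regime the refined estimate dominates $2^{(p-2)/2}|\nabla_z u|^p$, producing the Hardy remainder, while in the second the Poincar\'e estimate on $|\nabla_y u|^p$ alone absorbs the loss.

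The main obstacle is precisely this case $1<p<2$: no pointwise inequality of the form $(a^2+b^2)^{p/2}\ge a^p+c\,b^p$ with $c>0$ holds in this range (take $b$ fixed and $a\to\infty$, where the left side minus $a^p$ tends to $0$ for $p<2$), so the asymmetric constant in (b) cannot come from a single elementary estimate and must be produced by combining estimates adapted to the relative sizes of the two gradient components.
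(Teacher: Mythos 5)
Your part (a) is correct and is exactly the paper's argument: superadditivity of $t\mapsto t^{p/2}$ gives $|\nabla u|^p\ge|\nabla_yu|^p+|\nabla_zu|^p$, the $y$-contribution is discarded after the slicewise Poincar\'e inequality on $\omega$ (via Lemma \ref{lem1}), and the slicewise Hardy inequality on $\rm$ produces the remainder.

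For part (b) there is a genuine gap, and your own closing observation is the heart of it: no pointwise inequality $(a^2+b^2)^{p/2}\ge a^p+c\,b^p$ with $c>0$ holds when $1<p<2$; indeed, even restricted to the regime $a\le b$ the best constant compatible with keeping the full $a^p$ is $\min_{0\le t\le1}\bigl[(1+t^2)^{p/2}-t^p\bigr]=2^{p/2}-1$, which is strictly smaller than $2^{(p-2)/2}$ for $p<2$. (For the record, the paper's own proof of (b) is precisely the ``naive substitution'' you reject: it uses $(a+b)^{p/2}\ge2^{(p-2)/2}(a^{p/2}+b^{p/2})$ and says ``proceed as above'', which taken literally yields $2^{(p-2)/2}$ in front of the Poincar\'e term as well, i.e.\ only $\io|\nabla u|^p\,dx-2^{(p-2)/2}\la_{1,p}(\Om)\io|u|^p\,dx\ge2^{(p-2)/2}\left(\frac{M-p}{p}\right)^p\io|u|^p|z|^{-p}\,dx$, since the discarded $y$-term is no longer nonnegative.) Your proposed repair does not close the gap, for two concrete reasons. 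First, the regime-by-regime bounds do not deliver what you claim: where $|\nabla_yu|\le|\nabla_zu|$ the tangent-line estimate yields only $\frac p2\,2^{(p-2)/2}|\nabla_zu|^p$, and by the computation above no pointwise bound in that regime reaches $2^{(p-2)/2}|\nabla_zu|^p$ while retaining all of $|\nabla_yu|^p$; where $|\nabla_yu|>|\nabla_zu|$ you produce no Hardy contribution at all, and the Poincar\'e inequality has no surplus to ``absorb'' the missing term (take $u$ whose $y$-profile is a near-minimizer of $\la_{1,p}(\omega)$). Second, and more fundamentally, splitting $\Om$ according to the relative size of $|\nabla_yu|$ and $|\nabla_zu|$ is incompatible with the two inequalities you then want to invoke: the Poincar\'e inequality on $\omega$ and the Hardy inequality on $\rm$ are integral inequalities over entire fibers $\{z=\mathrm{const}\}$, resp.\ $\{y=\mathrm{const}\}$, and cannot be applied to the restriction of $|\nabla_yu|^p$ or $|\nabla_zu|^p$ to a subset of $\Om$ cut out by a pointwise condition on the gradient. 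Obtaining (b) with the undiminished constant $\la_{1,p}(\Om)$ therefore requires an input beyond any pointwise decomposition of $|\nabla u|^p$ followed by fiberwise integration; your obstruction shows this, and it applies equally to the paper's one-line argument.
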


\begin{proof} 
(a) Let $p\ge 2$. Then $(a+b)^{p/2} \ge a^{p/2}+b^{p/2}$ for all $a,b\ge 0$, hence
\[
|\nabla u|^p \ge |\nabla_yu|^p+|\nabla_zu|^p.
\]
Using this, Lemma \ref{lem1} and Hardy's inequality in $\rm$, we obtain
\begin{align*}
\io(|\nabla u|^p & -\lambda_{1,p}(\Omega)|u|^p)\,dx \\ 
& \ge \irm dz\iom(|\nabla_yu|^p-\lambda_{1,p}(\Omega) |u|^p)\,dy + \iom dy\irm|\nabla_zu|^p\,dz \\
& \ge \iom dy\irm|\nabla_zu|^p\,dz \ge \iom dy \left(\frac{M-p}p\right)^p\irm \frac{|u|^p}{|z|^p}\,dz \\
& = \left(\frac{M-p}p\right)^p\io\frac{|u|^p}{|z|^p}\,dx.
\end{align*}

(b) Let $1<p<2$. It is easy to see that $(a+b)^{p/2} \ge 2^{(p-2)/2} (a^{p/2}+b^{p/2})$ for such $p$ and all $a,b\ge 0$. Hence
\[
|\nabla u|^p \ge2^{(p-2)/2}\left( |\nabla_yu|^p+|\nabla_zu|^p\right)
\]
and we can proceed as above. 
\end{proof}

Here we have not excluded the case $ \lambda_{1,p}(\omega)=0$ but the result is only interesting if $\lambda_{1,p}(\omega)$ is positive. A sufficient condition for this is that $\omega$ has finite measure. 

\section{Applications} \label{appl}

In this section we work out some applications of Theorem \ref{theo4.5} for the potentials $V$ mentioned in the introduction. Let $\Omega$ be a domain in $\rn$. 
If $\Omega$ is bounded, $0\in\Omega$ and $V$ is the quadratic Hardy potential \eqref{hardy1}, then more can be said about the solution $u$ and the space $H^*$ in Theorem \ref{thma}. Given $1\le q<2$, we have
\[
\io |\nabla u|^2\,dx - \io V(x)u^2\,dx \ge C(q,\Omega)\|u\|_{W^{1,q}(\Omega)}^2
\] 
for all $u\in\calD(\Omega)$ (\cite{10}, Theorem 2.2). We see that $V\in L^r(\Omega)$ if $N/(N-1)<q<2$ ($r$ is as in \eqref{eq1.2}) and \eqref{eq1.3} holds with constant $W$. So $H^*=X$ and we also have $Y=W^{1,q}_0(\Omega)$ where $X,Y$ are as in Section \ref{main}. Hence by Lemma \ref{lem4.2} and Theorem \ref{theo4.5}, $H^*\subset W^{-1,2}(\Omega)$ and the solution $u$ is in $W^{1,q}_0(\Omega)$ for any  $1\le q<2$. 

Let $x=(y,z)\in\Omega\subset \r^k\times\r^{N-k}$, where $N\ge k>p>1$, and consider the Hardy potential
\[
V(x) := \left(\frac{k-p}p\right)^p|y|^{-p}.
\]
 
\begin{theo} \label{thmb}
Let $\Omega$ be a bounded domain containing the origin and let $V$ be the Hardy potential above. Then for each $f$ satisfying \eqref{cond} and each $1\le q<p$ there exists a solution $u\in W^{1,q}_0(\Omega)$ to \eqref{eq}. 
\end{theo}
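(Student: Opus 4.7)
The plan is to apply Theorem \ref{cor1} at a single $q_0\in(\max\{1,p-1,k(p-1)/(k-1)\},p)$ sufficiently close to $p$, with $W$ equivalent to a positive constant, so that the conclusion $u\in W^{1,q_0}_0(\Omega,W\,dx)$ coincides with $u\in W^{1,q_0}_0(\Omega)$. For an arbitrary $q\in[1,p)$ the continuous embedding $W^{1,q_0}_0(\Omega)\hookrightarrow W^{1,q}_0(\Omega)$ (on the bounded $\Omega$) then finishes the proof. So the work reduces to verifying the hypotheses of Theorem \ref{cor1} for one such $q_0$.

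The easier conditions follow from the classical Hardy inequality in $\r^k$ applied to $u(\cdot,z)\in\calD(\r^k)$ slicewise. Integrating in $z$ yields $\io V|u|^p\,dx\le\io|\nabla_y u|^p\,dx\le\io|\nabla u|^p\,dx$, strictly unless $u\equiv 0$, so \eqref{eq1.1} holds. The exponent in \eqref{eqe} is $r=q/[p(q-p+1)]$; since $V(x)=((k-p)/p)^p|y|^{-p}$ depends only on $y$, the condition $V\in L^r_{\textrm{loc}}(\Omega)$ reduces to $pr<k$, i.e.\ $q>k(p-1)/(k-1)$. Because $k>p>1$, this threshold lies strictly in $(p-1,p)$, producing an admissible window for $q_0$ just below $p$. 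The second half of \eqref{eqe}, $\io V^{q/p}|u|^q\,dx\le C\io|\nabla u|^q\,dx$, is the classical $q$-Hardy inequality in $\r^k$ (valid since $q<p<k$) applied slicewise and integrated in $z$.

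The key step is \eqref{eq1.3} with constant $W$. I would apply the Abdellaoui-Colorado-Peral remainder \cite{1} in a ball $B\subset\r^k$ containing every $y$-slice of $\Omega$:
\[
\int_B|\nabla_y u(\cdot,z)|^p\,dy-\left(\tfrac{k-p}{p}\right)^p\int_B\frac{|u(\cdot,z)|^p}{|y|^p}\,dy\ge C_0\left(\int_B|\nabla_y u(\cdot,z)|^q\,dy\right)^{p/q},
\]
uniformly in $z$. Integrating in $z$ and applying H\"older in $z$ to pull the $p/q$-power outside the $z$-integral (at the price of a finite factor involving the projection of $\Omega$ onto the $z$-axis) yields $\io|\nabla_y u|^p\,dx-\io V|u|^p\,dx\ge C_1(\io|\nabla_y u|^q\,dx)^{p/q}$. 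For $p\ge 2$ one then uses the pointwise inequality $|\nabla u|^p\ge|\nabla_y u|^p+|\nabla_z u|^p$ to obtain $\mathcal{Q}_V(u)\ge\io|\nabla_z u|^p\,dx+C_1(\io|\nabla_y u|^q\,dx)^{p/q}$; H\"older on the bounded $\Omega$ promotes $\io|\nabla_z u|^p$ to $C_2(\io|\nabla_z u|^q)^{p/q}$, the convexity bound $a^{p/q}+b^{p/q}\ge 2^{1-p/q}(a+b)^{p/q}$ together with $|\nabla_yu|^q+|\nabla_zu|^q\asymp|\nabla u|^q$ merges the two into $(\io|\nabla u|^q)^{p/q}$, and Poincar\'e on $\Omega$ absorbs $|u|^q$. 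For $1<p<2$ the same scheme is run with the sub-quadratic variant $|\nabla u|^p\ge 2^{(p-2)/2}(|\nabla_y u|^p+|\nabla_z u|^p)$ from Theorem \ref{poincare}, yielding \eqref{eq1.3} with a smaller but still positive constant.

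The main obstacle is \eqref{eq1.3}: because ACP controls only $|\nabla_y u|$, one has to separately promote the $L^p$-information on $|\nabla_z u|$ (available from the boundedness of $\Omega$ in the $z$-directions) into an $L^q$-bound and recombine into a genuine $W^{1,q}$ lower bound, with extra care in the sub-quadratic regime via the $2^{(p-2)/2}$-correction. A secondary subtlety is identifying the admissible window of $q$'s in which simultaneously $V\in L^r_{\textrm{loc}}$ and the ACP remainder applies; the strict inequality $k>p$ is precisely what guarantees this window is non-empty.
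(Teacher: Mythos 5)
Your overall architecture matches the paper's: both reduce the statement to Theorem \ref{cor1} applied at a single exponent $q_0\in\bigl(k(p-1)/(k-1),p\bigr)$ with constant $W$, both compute $r=q/[p(q-p+1)]$ and identify the same local integrability window for $V$, and both absorb the $|u|^q$ term by Poincar\'e. The divergence is in how \eqref{eq1.3} is established. The paper simply cites Lemma 2.1 of \cite{3}, which gives the remainder inequality $\mathcal{Q}_V(u)\ge C(q,\Omega)\bigl(\io|\nabla u|^q\,dx\bigr)^{p/q}$ with the \emph{full} gradient directly for the cylindrical potential $((k-p)/p)^p|y|^{-p}$, for every $1<p<k$ and $1<q<p$. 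You instead try to rebuild this from the full-dimensional ACP remainder by slicing in $y$ and recovering the $z$-derivatives separately. For $p\ge2$ your scheme is sound; for $1<p<2$ it has a genuine gap.

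The gap: running ``the same scheme'' with $|\nabla u|^p\ge2^{(p-2)/2}(|\nabla_yu|^p+|\nabla_zu|^p)$ forces you to bound
\[
2^{(p-2)/2}\io|\nabla_yu|^p\,dx-\io V|u|^p\,dx \;=\; 2^{(p-2)/2}\left[\io|\nabla_yu|^p\,dx-2^{(2-p)/2}\io V|u|^p\,dx\right]
\]
from below by $C\bigl(\io|\nabla_yu|^q\,dx\bigr)^{p/q}$, or at least by something finite. But $2^{(2-p)/2}>1$, and the constant $((k-p)/p)^p$ in the slicewise Hardy inequality is sharp, so the bracket is unbounded below: take $u(y,z)=v_\eps(y)w(z)$ with $v_\eps$ a near-optimizing Hardy sequence in $\r^k$, so that $\int|\nabla_y v_\eps|^p\,dy\to\infty$ while the ratio $\int V|v_\eps|^p\,dy/\int|\nabla_y v_\eps|^p\,dy\to1$ and $\|v_\eps\|_{L^p}$ stays bounded. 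Then the bracket tends to $-\infty$ while $\io|\nabla_zu|^p\,dx$ stays bounded, so the lower bound you obtain for $\mathcal{Q}_V(u)$ is vacuous. The point is that you cannot discard any fixed fraction of $\io|\nabla_yu|^p\,dx$ before subtracting the Hardy term. To close the case $1<p<2$ you need either the cylindrical remainder estimate as a black box (as the paper does via \cite{3}), or a different mechanism for extracting $L^q$-control of $\nabla_zu$ from $\mathcal{Q}_V$ that does not pass through the splitting $|\nabla u|^p\ge c_p(|\nabla_yu|^p+|\nabla_zu|^p)$ with $c_p<1$.
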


Recall from Lemma \ref{lem4.2}  that if $f$ satisfies \eqref{cond}, then $f\in W^{-1,p'}(\Omega)$. 

\begin{proof}
According to Lemma 2.1 in \cite{3} (see also \cite{1}, Theorem 1.1), for each $1<q<p$ there exists a constant $C(q,\Omega)$ such that 
\[
\mathcal{Q}_V(u) \ge C(q,\Omega)\left(\io|\nabla u|^q\,dx\right)^{p/q}, \quad u\in\calD(\Omega). 
\]
So the Poincar\'e inequality implies that \eqref{eq1.3} holds for some constant $W$. Since also \eqref{eqe} holds if $q$ is sufficiently close to $p$, we obtain
the conclusion using Theorem \ref{cor1} ($k(p-1)/(k-1)<q<p$ is needed in order to have $V\in L^r_{\text{loc}}(\Omega)$ with $r$ as in \eqref{eqe}). 
\end{proof}

This result  extends Theorem 3.1 in \cite{3} where it was assumed that $f\in L^\gamma(\Omega)$ for some $\gamma>(p^*)'$. 

In our next theorem we essentially recover the main result (Theorem 4.3) of \cite{9}.

\begin{theo} \label{thmc}
Let $\Omega$ be a domain and let $V\in L^\infty_{\emph{loc}}(\Omega)$, $V\ge 0$. Suppose that 
\begin{equation} \label{tt}
\mathcal{Q}_V(u) \ge \io\wt W|u|^p\,dx  \quad \text{for all }  u\in\calD(\Omega) \text{ and some } \wt W\in \mathcal{C}(\Omega), \ \wt W>0.
\end{equation}
Then for each $f$ satisfying \eqref{cond} there exists a solution $u\in W^{1,p}_{\emph{loc}}(\Omega)$ to \eqref{eq}.
\end{theo}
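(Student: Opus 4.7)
The plan is to follow the scheme of Theorem \ref{theo4.5} but to replace the coercivity hypothesis \eqref{eq1.3} (which is \emph{not} available from \eqref{tt} alone) by a direct local $W^{1,p}$ estimate on the approximate solutions, exploiting that $V\in L^\infty_{\textrm{loc}}(\Omega)$. Since \eqref{cond} holds, Lemma \ref{lem4.2}(b) gives $f\in W^{-1,p'}(\Omega)$, so Theorem \ref{theo3.2} supplies $u_n\in W^{1,p}_0(\Omega)$ solving \eqref{eq4.2} for some $\epsilon_n\downarrow 0$. Testing \eqref{eq4.2} with $u_n$ yields
\[
\mathcal{Q}_V(u_n)+\epsilon_n\io(V+1)|u_n|^p\,dx=\langle f,u_n\rangle,
\]
and the bound $\langle f,u_n\rangle\le \|f\|[\mathcal{Q}_V(u_n)]^{1/p}$ of \eqref{eq4.1} extends from $\calD(\Omega)$ to $W^{1,p}_0(\Omega)$ by density together with continuity of $u\mapsto \io V|u|^p\,dx$ (\cite{11}, Theorem 5.4.1). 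Since $\mathcal{Q}_V(u_n)\ge 0$ by \eqref{tt}, one gets $\mathcal{Q}_V(u_n)\le \|f\|^{p'}$, and \eqref{tt} then furnishes $\int_K|u_n|^p\,dx\le C_K$ on every compact $K\subset\Omega$.

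The main obstacle is the local $W^{1,p}$ estimate. Fix $\eta\in\calD(\Omega)$ with $0\le\eta\le 1$ and test \eqref{eq4.2} against $\eta^p u_n\in W^{1,p}_0(\Omega)$. The cross term $p\int|\nabla u_n|^{p-2}\nabla u_n\cdot\nabla\eta\,u_n\eta^{p-1}\,dx$ is controlled by Young's inequality, half of $\int\eta^p|\nabla u_n|^p\,dx$ being absorbed on the left. The term $(1-\epsilon_n)\int V|u_n|^p\eta^p\,dx$ is bounded using $V\le M_\eta$ on $\textrm{spt}\,\eta$ (this is precisely where $V\in L^\infty_{\textrm{loc}}$ enters), and $\langle f,\eta^p u_n\rangle$ is estimated using $f\in W^{-1,p'}(\Omega)$, producing a further factor $(\int\eta^p|\nabla u_n|^p\,dx)^{1/p}$ that is absorbed by Young since $p>1$. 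This gives $\int\eta^p|\nabla u_n|^p\,dx\le C_\eta$ uniformly in $n$, so $(u_n)$ is bounded in $W^{1,p}_{\textrm{loc}}(\Omega)$.

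Extracting a subsequence along which $u_n\rightharpoonup u$ in $W^{1,p}_{\textrm{loc}}(\Omega)$ and $u_n\to u$ a.e., the three limits in the analog of Lemma \ref{lem4.4} follow by H\"older and dominated convergence, once more using $V\in L^\infty(\textrm{spt}\,\zeta)$ and $|T(u_m-u_n)|\le 1$. Theorem \ref{theo2.2} applied with $q=p$ then gives $\nabla u_n\to\nabla u$ a.e., after which passage to the limit in \eqref{eq4.4} via Proposition 5.4.7 of \cite{11}, exactly as in the proof of Theorem \ref{theo4.5}, produces \eqref{eq} in $\calD'(\Omega)$ with $u\in W^{1,p}_{\textrm{loc}}(\Omega)$.
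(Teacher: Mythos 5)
Your proof is correct, but it takes a genuinely different route from the paper's. The paper disposes of this theorem in two lines: it invokes Proposition 3.1 of Tak\'a\v{c}--Tintarev \cite{9}, which shows that the purely zeroth-order bound \eqref{tt} self-improves to the full first-order bound \eqref{eq1.3} with $q=p$ (for a possibly different weight $W$), and then applies Theorem \ref{theo4.5} as a black box. You bypass that self-improvement entirely: from \eqref{tt} you extract only the global bound $\mathcal{Q}_V(u_n)\le\|f\|^{p'}$, hence a local $L^p$ bound on the $u_n$, and you recover the missing local gradient bound by a Caccioppoli estimate (testing with $\eta^pu_n$), which is precisely where $V\in L^\infty_{\textrm{loc}}(\Omega)$ and $f\in W^{-1,p'}(\Omega)$ enter; the remaining steps (the analogue of Lemma \ref{lem4.4}, Theorem \ref{theo2.2} with $q=p$, passage to the limit in \eqref{eq4.4}) go through and are in fact simpler than in Section \ref{main} because $V$ is locally bounded. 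The trade-off: the paper's argument is shorter and, via Lemma \ref{lem4.3} and Theorem \ref{theo4.5}, yields the stronger global conclusion $u\in W^{1,p}_0(\Omega,W\,dx)$ together with a uniform bound on $\|u_n\|_Y$; yours is self-contained (no appeal to \cite{9}) and makes visible that for locally bounded $V$ the gradient part of hypothesis \eqref{eq1.3} is not actually needed --- but it delivers only $u\in W^{1,p}_{\textrm{loc}}(\Omega)$, which is all the statement claims. Two minor points to record if you write this up: justify testing with $\eta^p u_n$ when $p$ is not an integer (e.g.\ replace $\eta^p$ by $\eta^s$ with $s\ge\max(p,2)$, which changes nothing in the absorption), and note explicitly that \eqref{tt} and \eqref{eq4.1} extend from $\calD(\Omega)$ to $W^{1,p}_0(\Omega)$ by density together with the continuity of $v\mapsto\io V|v|^p\,dx$, as you indicate.
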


\begin{proof}
According to Proposition 3.1 in \cite{9} (see also (2.6) there), \eqref{tt} implies that $\mathcal{Q}_V$ satisfies \eqref{eq1.3} with $q=p$. Hence our Theorem \ref{theo4.5} applies. 
\end{proof}

\begin{theo} \label{thmd}
Let $\Omega = \omega\times \r^M$,  where $\omega\subset \r^{N-M}$ is a domain such that $\lambda_{1,p}(\omega)>0$, $N>M>p$, and let $V(x) = \lambda_{1,p}(\Omega)$. Then for each $f$ satisfying \eqref{cond} there exists a solution $u\in W^{1,p}_{\emph{loc}}(\Omega)$ to \eqref{eq}.
 \end{theo}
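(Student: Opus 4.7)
The plan is to reduce Theorem \ref{thmd} to Theorem \ref{thmc}. Since $V\equiv\lambda_{1,p}(\Omega)$ is constant, we have $V\in L^\infty_{\text{loc}}(\Omega)$ automatically, and $V\ge 0$ because $\lambda_{1,p}(\Omega)=\lambda_{1,p}(\omega)>0$ by Lemma \ref{lem1} together with the hypothesis. The only non-trivial point is to verify the condition \eqref{tt}, i.e.\ to exhibit a continuous strictly positive weight $\wt W$ on $\Omega$ with
\[
\mathcal{Q}_V(u)\ge \io \wt W\,|u|^p\,dx\quad\text{for all } u\in\calD(\Omega).
\]

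This is essentially delivered by Theorem \ref{poincare}: in both cases $p\ge 2$ and $1<p<2$, it yields
\[
\mathcal{Q}_V(u)\ge C_p\io \frac{|u|^p}{|z|^p}\,dx,\qquad u\in\calD(\Omega),
\]
with $C_p$ the explicit positive constant appearing there. The weight $|z|^{-p}$ is not continuous on $\Omega$ because it blows up along the slice $\omega\times\{0\}$, so it cannot be used directly as $\wt W$. However, since \eqref{tt} is only a lower bound, we may replace $|z|^{-p}$ by any pointwise smaller, strictly positive continuous function on $\Omega$. A convenient choice is
\[
\wt W(x):=\frac{C_p}{1+|z|^p},
\]
which lies in $\mathcal{C}(\Omega)$, is everywhere positive, and satisfies $\wt W(x)\le C_p/|z|^p$ almost everywhere on $\Omega$. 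Consequently
\[
\io \wt W\,|u|^p\,dx\le C_p\io \frac{|u|^p}{|z|^p}\,dx\le \mathcal{Q}_V(u),
\]
so \eqref{tt} holds.

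With \eqref{tt} in hand, Theorem \ref{thmc} applies verbatim and produces a solution $u\in W^{1,p}_{\text{loc}}(\Omega)$ to \eqref{eq}, completing the proof. I foresee no serious obstacle: the Poincar\'e--Hardy inequality of Theorem \ref{poincare} and the equality $\lambda_{1,p}(\Omega)=\lambda_{1,p}(\omega)$ of Lemma \ref{lem1} are already established, and the only real subtlety is that the admissible weight in \eqref{tt} must be continuous and positive on \emph{all} of $\Omega$, which is why the perturbation ``$1+$'' in the denominator is needed to tame the singularity of $|z|^{-p}$ on $\omega\times\{0\}$.
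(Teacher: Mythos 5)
Your proof is correct and follows essentially the same route as the paper: it invokes Theorem \ref{poincare} to bound $\mathcal{Q}_V(u)$ from below and then replaces the singular weight $C_p|z|^{-p}$ by the continuous positive weight $\wt W(x)=C_p/(1+|z|^p)$ so that Theorem \ref{thmc} applies. The paper's own proof makes exactly this choice of $\wt W$; you merely spell out the justification in more detail.
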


\begin{proof}
Let $x=(y,z)\in\omega\times\r^M$.  It follows from Theorem \ref{poincare} that
\[
\mathcal{Q}_V(u) = \io|\nabla u|^p\,dx-\lambda_{1,p}(\Omega)\io|u|^p\,dx \ge \io \wt W|u|^p\,dx \quad \text{for all }u\in\calD(\Omega),
\]
where $\wt W(x)=C_p/(1+|z|^p)$ and $C_p$ is the constant on the right-hand side of respectively (a) and (b) of Theorem \ref{poincare}. So the conclusion follows from Theorem \ref{thmc}.
\end{proof}

Below we give an example showing that the solution we obtain need not be in $W^{1,p}(\Omega)$. 

\begin{example}
\emph{
Let $\Omega = \omega\times \rm$, where $\omega\subset\r^{N-M}$ is such that $\lambda_1 := \lambda_{1,2}(\omega) > 0$ and put
\[
\mathcal{Q}(u) := \io(|\nabla u|^2-\lambda_1u^2)\,dx.
\] 
By the definition of $\lambda_1$, for each $n$ we can find $u_n\in \calD(\Omega)$ such that
\[
\left(1-\frac1n\right)\io|\nabla u_n|^2\,dx \le \lambda_1\io u_n^2\,dx.
\]
Hence 
\[
Q(u_n) \le \frac1n\io |\nabla u_n|^2\,dx.
\]
By normalization, we can assume 
\[
Q(u_n) = \frac1{n^2}.
\]
Translating along $\rm$, we may assume $\text{spt\,}u_n\cap \text{spt\,}u_m =\emptyset$ if $n\ne m$. We define 
\[
f_n := -\Delta u_n-\lambda_1u_n \quad \text{and} \quad f:= \sum_{n=1}^\infty f_n \quad \text{in }\calD'(\Omega).
\]
Then $f\in X$ because 
\[
\|f\| = \left(\sum_{n=1}^\infty \frac1{n^2}\right)^{1/2} < \infty,
\]
and $u = \sum_{n=1}^\infty u_n$ is a weak solution for the equation
\[
-\Delta u -\lambda_1 u = f(x).
\]
Moreover, $u$ is the unique solution in $H$ as follows from Theorem \ref{thma}. But
\[
\io|\nabla u|^2\,dx = \sum_{n=1}^\infty\io|\nabla u_n|^2\,dx \ge \sum_{n=1}^\infty \frac1n = +\infty,
\] 
so $u\not\in W^{1,2}(\Omega)$. 
}
\end{example}


\begin{thebibliography}{99} 

\bibitem{1}  Abdellaoui, B., Colorado, E., Peral, I., ``Some improved Caffarelli-Kohn-Nirenberg inequa\-lities", \textit{Calc. Var. Partial Differential Equations} 23 (2005), 327-345.

\bibitem{2}  Boccardo, L., Murat, F., ``Almost everywhere convergence of the gradients of solutions to elliptic and parabolic equations", \textit{Nonlinear Anal.} 19 (1992), 581-597.

\bibitem{3}  Brandolini, B., Chiacchio, F., Trombetti, C., ``Some remarks on nonlinear elliptic problems involving Hardy potentials", \textit{Houston J. Math.} 33 (2007), 617-630.

\bibitem{4}  Brezis, H., V\'azquez, J., ``Blow-up solutions of some nonlinear elliptic problems", \textit{Rev. Mat. Univ. Complut. Madrid} 10 (1997), 443-469.

\bibitem{5}  de Valeriola, S., Willem, M., ``On some quasilinear critical problems", \textit{Adv. Nonlinear Stud.} 9 (2009), 825-836.

\bibitem{dem}  del Pino, M., Elgueta, M., Man\'asevich, R., ``A homotopic deformation along $p$ of a Leray-Schauder degree result and existence for $(|u'|^{p-2}u')'+f(t,u)=0$, $u(0)=u(T)=0$, $p>1$, \emph{J. Diff. Eq.} 80 (1989), 1-13.

\bibitem{6}  Dupaigne, L., ``A nonlinear elliptic PDE with the inverse square potential", \textit{J. Anal. Math.} 86 (2002), 359-398.

\bibitem{est} Esteban, M.J., ``Nonlinear elliptic problems in strip-like domains: symmetry of positive vortex rings'', \textit{Nonl. Anal.} 7 (1983), 365-379.

\bibitem{fhtt} Fleckinger-Pell\'e, J., Hern\'andez, J., Tak\'a{\v c}, P., de Th\'elin, F., ``Uniqueness and positivity for solutions of equations with the $p$-Laplacian'', \emph{Lecture Notes in Pure and Appl. Math.} 194, Dekker, New York, 1998, pp.\ 141--155.

\bibitem{7}  Garcia Azorero J.P., Peral Alonso, I.,  ``Hardy inequalities and some critical elliptic and parabolic pro\-blems", \textit{J. Diff. Eq.} 144 (1998), 441-476.

\bibitem{rak} Rakotoson, J.M., ``Quasilinear elliptic problems with measures as data'', \textit{Diff. Int. Eq.} 4 (1991), 449-457.

\bibitem{8}  Simon, J., ``R\'egularit\'e de la solution d'une \'equation non lin\'eaire dans $\mathbb{R}^N$, \textit{Lecture Notes in Math.} 665, Springer, Berlin, 1978, 205-227.

\bibitem{9}  Tak\'a\v{c}, P., Tintarev, K., ``Generalized minimizer solutions for equations with the $p$-Laplacian and a potential term", \textit{Proc. Roy. Soc. Edinburgh Sect. A} 138 (2008), 201-221.

\bibitem{10}  Vazquez, J., Zuazua, E., ``The Hardy inequality and the asymptotic behaviour of the heat equation with an inverse-square potential", \textit{J. Funct. Anal.} 173 (2000), 103-153.

\bibitem{wi} Willem, M., ``Minimax Theorems'', Birkh\"auser, Boston, 1996.

\bibitem{11}  Willem, M., ``Functional Analysis. Fundamentals and Applications", Birkh\"auser/Springer, New York, 2013.

 

\end{thebibliography}
\end{document}